\newtheorem{assumption}{Assumption}
\begin{document}

\title{CPQR-based randomized algorithms for generalized CUR decompositions
}


\author{Guihua Zhang         \and Hanyu Li \and Yimin Wei
}


\institute{Guihua Zhang and Hanyu Li \at
              College of Mathematics and Statistics, Chongqing University, Chongqing, 401331, 
              People’s Republic of China.
              \email{lihy.hy@gmail.com; hyli@cqu.edu.cn}\\
Yimin Wei  \at
              School of Mathematical Sciences and Shanghai Key Laboratory of Contemporary Applied Mathematics, Fudan University, Shanghai 200433, People’s Republic of China.
              \email{ymwei@fudan.edu.cn; yimin.wei@gmail.com} 
}

\date{Received: date / Accepted: date}

\maketitle

\begin{abstract}
Based on the column pivoted QR decomposition, we propose some randomized algorithms including pass-efficient ones for the generalized CUR decompositions of matrix pair and matrix triplet. 
Detailed error analyses of these algorithms are provided.  Numerical experiments are given to test the proposed randomized algorithms.
\keywords{generalized CUR decomposition \and matrix pair \and matrix triplet \and randomized algorithm \and CPQR}
\subclass{ 15A99 \and 68W20 }
\end{abstract}

\section{Introduction}
As we know, the singular value decomposition (SVD) is the most popular tool for 
reducing dimension of a data matrix. However, 
the final features 
with the dimension reduction by SVD are usually difficult to explain and analyse \cite{mahoney2009cur,gidisu2022generalized}. To this end, some scholars proposed the CUR decomposition, 
which is composed of  subsets of the original columns and rows of the given matrix. Specifically, 
for a matrix $A$, the CUR decomposition can be expressed as
\begin{align*}
	A \approx C_AM_AR_A,
\end{align*}
where $C_A$ and $ R_A$ are the subsets of columns and rows of $A$ respectively, and $M_A$ can be constructed by different ways; see Remark \ref{remark1} below. Recently, Gidisu and Hochstenbach \cite{gidisu2022generalized,gidisu2022rsvd} proposed the generalized CUR decompositions for matrix pair and matrix triplet; see Definitions \ref{definition} and \ref{definition1} below. 

At present, there are many works on applications (i.e., \cite{gidisu2022generalized,gidisu2022rsvd,cai2021robust,lei2022exemplar}), algorithms (i.e., \cite{gidisu2022generalized,gidisu2022rsvd,wang2013improving,sorensen2016deim,voronin2017efficient,chen2020efficient,dong2021simpler,boutsidis2014optimal,drineas2008relative}), and perturbation analyses (i.e., \cite{hamm2021perturbations,chen2022tensor,che2022perturbations}) for (generalized) CUR decompositions. Here we mainly focus on their algorithms. For the case of CUR decomposition, Drineas et al. \cite{drineas2008relative} presented the relative error guaranteed algorithm from the point of view of leverage-score sampling.  
Subsequently, Wang and Zhang \cite{wang2013improving} proposed the adaptive sampling algorithm, 
and Boutsidis and Woodruff \cite{boutsidis2014optimal} combined the previous two sampling methods with BSS sampling \cite{boutsidis2014near} 
and obtained the optimal CUR decomposition.
Later, Sorensen and Embree \cite{sorensen2016deim} introduced a novel algorithm by means of the discrete empirical interpolation method (DEIM), and Voronin and Martinsson \cite{voronin2017efficient} presented efficient algorithms based on the column pivoted QR decomposition (CPQR). Recently, Chen et al. \cite{chen2020efficient} proposed an algorithm by using the truncated LU factorization, and Dong and Martinsson \cite{dong2021simpler} compared the randomized algorithms based on CPQR \cite{voronin2017efficient} and LU decomposition with partial pivoting \cite{dong2021simpler}.
For the case of generalized CUR decompositions of matrix pair and matrix triplet, by virtue of the DEIM, Gidisu and Hochstenbach \cite{gidisu2022generalized,gidisu2022rsvd} proposed the corresponding deterministic algorithms. 
Very recently, Cao et al. \cite{cao2023randomized} presented some randomized algorithms based on L-DEIM \cite{Gidisu2022hybrid}. 

In this paper, we investigate the CPQR-based randomized algorithms for the generalized CUR decompositions of matrix pair and matrix triplet, and 
present the corresponding expectation error bounds. 

The rest of this paper is organized as follows. In Section \ref{sec-pre}, we introduce some preliminaries including the definitions of the generalized CUR decompositions for matrix pair and matrix triplet and the interpolative decomposition. The CPQR-based randomized algorithms and their error bounds are presented in Sections \ref{sec-pairs} and \ref{sec-triplets}. Finally, we provide some numerical results to test the proposed algorithms. 

\section{Preliminaries} \label{sec-pre}
We first introduce some notations used in this paper. Given a matrix $A\in \mathbb{R}^{m \times n}$, $A^T$, $A^{\dagger}$, $\Vert A\Vert$ , $\Vert A\Vert_{F}$ and $\sigma_j(A)$ denote its transpose, Moore-Penrose inverse, spectral norm, Frobenius norm and $j$th largest singular value, respectively. 
And, we use $A(I,J)$ to denote the part of $A$ consisting of rows from the index set $I$ and columns from the index set $J$. 
In addition, let $[n]=\{1,\cdots,n\}$, $I_n$ be the $n\times n$ identity matrix, $\mathbb{E}$ be the expectation notation, $\mathbb{E}\left[ \cdot \vert \cdot \right]$ be the conditional expectation, 
$P_A$ be the orthogonal projector on ${\rm range}(A)$, and $P_{A,B}$ be the oblique projector on ${\rm range}(A)$ along ${\rm range}(B)$. 

\subsection{CUR decomposition for matrix pair}

\begin{definition}[\cite{gidisu2022generalized}]\label{definition}
	Let $(A,B)$ be a matrix pair, where $A\in \mathbb{R}^{m \times n}$ and $B\in \mathbb{R}^{d \times n}$. The CUR decomposition of $(A,B)$ of rank $k$ is
	\begin{align*}
		A_k=C_AM_AR_A,
		B_k=C_BM_BR_B,
	\end{align*}
where $C_A=A(:,J)\in \mathbb{R}^{m \times k}, C_B=B(:,J)\in \mathbb{R}^{d \times k}, R_A=A(I_A,:)\in \mathbb{R}^{k \times n}$ and $R_B=B(I_B,:)\in \mathbb{R}^{k \times n}$.
\end{definition}

\begin{remark}\label{remark1}
    The middle matrices $M_A$ and $M_B$ have some different ways to compute. For example, 
    we can construct $M_A=A(I,J)^\dagger$, or $M_A=C_A^\dagger AR_A^\dagger$ \cite{hamm2021perturbations}. In this paper, we use $M_A=C_A^\dagger AR_A^\dagger$ to analyse the related algorithms. 
\end{remark}

To obtain the error analyses of our randomized algorithms of generalized CUR decompositions, 
we  need 
the generalized singular value decomposition (GSVD). 
For the above matrix pair $(A,B)$, 
the GSVD can be expressed as
\begin{align}
	A=U\Sigma_{1}Y^T, B=\tilde{V}\Sigma_{2}Y^T, \label{gsvd}
\end{align}
where $U\in \mathbb{R}^{m \times m}$ and $ \tilde{V}\in \mathbb{R}^{d \times d}$ are column orthogonal, $Y\in \mathbb{R}^{n \times n}$ is nonsingular, and $\Sigma_1 \in \mathbb{R}^{m \times n}$ and $ \Sigma_2 \in \mathbb{R}^{d \times n}$ are diagonal with diagonal entries being in $[0,1]$. Here, we let the diagonal entries of $\Sigma_1$ be in nondecreasing order, while those of $\Sigma_2$ be in nonincreasing order. For the details of GSVD, please refer to \cite{van1976generalizing,van1985computing}.

\subsection{CUR decomposition for matrix triplet}
\begin{definition}[\cite{gidisu2022rsvd}] \label{definition1}
	Let $(A,B,G)$ be a matrix triplet, where $A\in \mathbb{R}^{m \times n}$, $B\in \mathbb{R}^{m \times t}$ and $G\in \mathbb{R}^{d \times n}$. The CUR decomposition of $(A,B,G)$ of rank $k$ is
	\begin{align*}
		A_k=C_AM_AR_A,
		B_k=C_BM_BR_B,
		G_k=C_GM_GR_G,
	\end{align*}
	where $C_A=A(:,J)\in \mathbb{R}^{m \times k}, C_B=B(:,J_B)\in \mathbb{R}^{m \times k}, 
	C_G=G(:,J)\in \mathbb{R}^{d \times k}, R_A=A(I,:)\in \mathbb{R}^{k \times n}, R_B=B(I,:)\in \mathbb{R}^{k \times t}$ and 
	$R_G=G(I_G,:)\in \mathbb{R}^{k \times n}$.
\end{definition}
\begin{remark}
	The decomposition shows that the selected column indices of $A$ and $G$ are the same, and the selected row indices of $A$ and $B$ are the same. For the middle matrices $M_A, M_B$ and $M_G$, they can be constructed as discussed in Remark \ref{remark1}.
\end{remark}

\subsection{Interpolative decomposition (ID)}
The ID 
is very close to the CUR decomposition. Specifically, given $A\in \mathbb{R}^{m \times n}$, it has the following forms
\begin{align*}
	A\approx \hat{C}\hat{V}^T, A\approx \hat{W}\hat{R},
\end{align*}
where $\hat{C}\in \mathbb{R}^{m \times k}$ and $\hat{R}\in \mathbb{R}^{k \times n}$ consist of $k$ columns 
and rows of $A$, respectively, and 
$\hat{V}\in \mathbb{R}^{n \times k}$ and $\hat{W}\in \mathbb{R}^{k \times n}$ are such that ${\rm max}_{i,j}\lvert \hat{V}(i,j)\rvert \leq 1$ and ${\rm max}_{i,j}\lvert \hat{W}(i,j)\rvert \leq 1$, respectively. The above two IDs are one-sided IDs, i.e., column ID and row ID. Accordingly, there exists a two-sided ID: 
\begin{align*}
	A\approx \hat{W}A_s\hat{V}^T,
\end{align*}
where $\hat{W}\in \mathbb{R}^{m \times k}$, $\hat{V}\in \mathbb{R}^{n \times k}$, and $A_s\in \mathbb{R}^{k \times k}$ 
is the submatrix of $A$.

\section{Randomized algorithms for CUR decomposition of matrix pair} \label{sec-pairs}
We first propose a randomized algorithm based on CPQR. Then,  
a pass-efficient version of the 
algorithm is derived. Their  expectation error bounds are also presented correspondingly.
Moreover, we also provide the alternative error analyses of algorithms on the basis of GSVD.

\subsection{CPQR-based randomized algorithm}
For $A\in \mathbb{R}^{m \times n}$ and $B\in \mathbb{R}^{d \times n}$, assume ${\rm rank}(A) > k$ and $ {\rm rank}(B) > k$. Using CPQR, 
we have
\begin{align*}
	\begin{bmatrix}
		A \\ B
	\end{bmatrix}\hat{\Pi}=\hat{Q}\hat{T}=\hat{Q}\begin{bmatrix}
	\hat{T}_1, \hat{T}_2
\end{bmatrix},
\end{align*} 
where $\hat{Q}\in \mathbb{R}^{(m+d) \times (m+d)}$ is orthogonal, $\hat{T}\in \mathbb{R}^{(m+d) \times n}$ is upper triangular with  $\hat{T}_1\in \mathbb{R}^{(m+d) \times k}$ and $\hat{T}_2\in \mathbb{R}^{(m+d) \times (n-k)}$, and $\hat{\Pi} \in \mathbb{R}^{n \times n}$ is a permutation matrix. 
Similar to \cite{voronin2017efficient}, $C_A$ and $C_B$ in the CUR decomposition of the matrix pair $(A,B)$ can be obtained from 
$\hat{Q}\hat{T}_1$ directly. 
However, it 
is not very efficient on the computation cost. To tackle this problem, we consider the randomized CPQR 
\cite{dong2021simpler,duersch2017randomized,duersch2020randomized,martinsson2017householder}. 

Let $\Omega \in \mathbb{R}^{l \times (m+d)}$ be a Gaussian matrix, where $l=k+p$ and $p$ is the oversampling parameter. Then, using CPQR, we have
\begin{align*}
	\Omega \begin{bmatrix}
		A \\ B
	\end{bmatrix}\Pi=\tilde{Q}T=\tilde{Q}\begin{bmatrix}
	T_1, T_2
\end{bmatrix},
\end{align*}
where $\tilde{Q}\in \mathbb{R}^{l \times l}$ is orthogonal, $T_1\in \mathbb{R}^{l \times l}$ is invertible and upper triangular, and $\Pi=I_n(:,J_n)$ with $J_n$ being a permuted index vector. Based on the above decomposition, we can get the column index vector $J$ appearing in the CUR decomposition of the matrix pair $(A,B)$, 
i.e., $J=J_n(1:l)$. 
The specific algorithm is listed 
in Algorithm \ref{alg1}, from which, we can obtain
\begin{align*}
	XI_n(:,J)=X\Pi_C=\tilde{Q}T_1.
\end{align*}

\begin{algorithm}[ht] 
	\caption{Column selection for the CUR decomposition of $(A,B)$} 
	\label{alg1}
	\hspace*{0.02in} {\bf Input:} 
	 $A\in \mathbb{R}^{m \times n}$, $B\in \mathbb{R}^{d \times n}$, the target rank $k <n$, and the sample size $l=k+p$.\\
	\hspace*{0.02in} {\bf Output:} 
	A column index vector $J$ with $\lvert J\rvert =l$, $C_A$ and $C_B$.
	\begin{algorithmic}[1]
		\State Draw a Gaussian matrix $\Omega \in \mathbb{R}^{l \times (m+d)}$.
				
		\State Compute $X=\Omega \begin{bmatrix}
			A \\ B
		\end{bmatrix}\in \mathbb{R}^{l \times n}$.
		\State Compute the CPQR of $X$, that is
		\begin{align*}
			XI_n(:,J_n)=X\Pi = X\begin{bmatrix}
				\Pi_C, \Pi_C^c
			\end{bmatrix}=\tilde{Q}\begin{bmatrix}
				T_1, T_2
			\end{bmatrix},
		\end{align*}
where $\Pi_C \in \mathbb{R}^{n \times l}$ and $\Pi_C^c \in \mathbb{R}^{n \times (n-l)}$.
		\State $J=J_n(1:l)$, $C_A=A(:,J)$ and $C_B=B(:,J)$.	
	\end{algorithmic}
\end{algorithm}

To investigate the theoretical analysis of Algorithm \ref{alg1}, we further consider the following column ID of the matrix pair $(A,B)$: 
\begin{align*}
	\begin{bmatrix}
		A\\ B
	\end{bmatrix} \approx \begin{bmatrix}
	C_A \\ C_B
\end{bmatrix} V^T =\begin{bmatrix}
C_AV^T \\ C_BV^T
\end{bmatrix},
\end{align*}
which 
can be written as $A \approx C_AV^T, B \approx C_BV^T$.
Solving one of them by the least squares method implies
\begin{align}
	V^T=C_A^{\dagger}A=C_B^{\dagger}B.
\end{align}
Besides, noting that $X\Pi_C$ is invertible due to both $\tilde{Q}$ and $T_1$ being invertible, we define a rank-$l$ oblique projector onto the row space of $X$ as follows:
\begin{align*}
 P_{\Pi_C,N_1}=\Pi_C(X\Pi_C)^{-1}X=\Pi_CT_1^{-1}\tilde{Q}^TX.
\end{align*}
Since we mainly focus on the range where the oblique projector projects on,  hereafter we just use $N_x$ to denote the along range ${\rm range}(N_x)$ for oblique projectors. 
It is clear that $XP_{\Pi_C,N_1}=X$.

Now, we present the
theoretical analysis of Algorithm \ref{alg1}. 
\begin{theorem} \label{theorem1}
	Let $C_A\in \mathbb{R}^{m \times l}$ and $C_B\in \mathbb{R}^{d \times l}$ be obtained by Algorithm \ref{alg1}, and $V^T=C_A^{\dagger}A=C_B^{\dagger}B$. Then 
	\begin{align}
		\Vert A-C_AV^T \Vert \leq \Vert I_n-P_{\Pi_C,N_1} \Vert \Vert A(I_n-X^{\dagger}X) \Vert,  \label{column A} \\ 
		\Vert B-C_BV^T \Vert \leq \Vert I_n-P_{\Pi_C,N_1} \Vert \Vert B(I_n-X^{\dagger}X) \Vert. \label{column B}
	\end{align}
\end{theorem}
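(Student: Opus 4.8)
The plan is to establish \eqref{column A} in detail; inequality \eqref{column B} then follows verbatim, replacing $A,C_A$ by $B,C_B$ and invoking $V^T=C_B^{\dagger}B$ in place of $V^T=C_A^{\dagger}A$. Before anything else I would record two structural facts produced by Algorithm \ref{alg1}. Since $\Pi_C=I_n(:,J_n(1:l))=I_n(:,J)$, the selected columns satisfy $C_A=A(:,J)=A\Pi_C$. And because $X\Pi_C=\tilde{Q}T_1$ is invertible, the matrix $W:=(X\Pi_C)^{-1}X\in\mathbb{R}^{l\times n}$ is well defined and gives
\[
C_A W = A\Pi_C(X\Pi_C)^{-1}X = A\,P_{\Pi_C,N_1}.
\]
Thus $A P_{\Pi_C,N_1}$ is itself a (generally suboptimal) column interpolative approximation of $A$ whose columns lie in $\mathrm{range}(C_A)$.

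The first step is a least-squares optimality argument. Since $V^T=C_A^{\dagger}A$, the product $C_AV^T=C_AC_A^{\dagger}A=P_{C_A}A$ is the orthogonal projection of $A$ onto $\mathrm{range}(C_A)$, which is the best spectral-norm approximation of $A$ among matrices whose columns lie in $\mathrm{range}(C_A)$. (This uses that $A-C_AV^T=(I_m-P_{C_A})A$ and $P_{C_A}A-C_AW$ have mutually orthogonal column spaces, so $\|A-C_AV^T\|\le\|A-C_AW\|$ for every $W$.) Applying this with the $W$ above yields
\[
\|A-C_AV^T\| \le \|A-C_A W\| = \|A(I_n-P_{\Pi_C,N_1})\|.
\]

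The second step introduces $X^{\dagger}X$ into the bound, and this factorization is the only genuinely non-routine point. Starting from the identity $XP_{\Pi_C,N_1}=X$ noted before the theorem, we get $X(I_n-P_{\Pi_C,N_1})=0$, hence $X^{\dagger}X(I_n-P_{\Pi_C,N_1})=0$ and therefore $(I_n-X^{\dagger}X)(I_n-P_{\Pi_C,N_1})=I_n-P_{\Pi_C,N_1}$. Left-multiplying by $A$ and using submultiplicativity of the spectral norm gives
\[
\|A(I_n-P_{\Pi_C,N_1})\| = \|A(I_n-X^{\dagger}X)(I_n-P_{\Pi_C,N_1})\| \le \|A(I_n-X^{\dagger}X)\|\,\|I_n-P_{\Pi_C,N_1}\|,
\]
which, chained with the previous display, proves \eqref{column A}. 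I expect the main obstacle to be recognizing the two facts that make the argument close: that the oblique projector secretly realizes a column ID ($AP_{\Pi_C,N_1}=C_AW$), so that least-squares optimality applies, and that $X(I_n-P_{\Pi_C,N_1})=0$ lets one absorb the orthogonal projector $I_n-X^{\dagger}X$ onto $\ker X$ without loss, thereby converting the oblique-projection residual into the orthogonal-projection residual up to the factor $\|I_n-P_{\Pi_C,N_1}\|$.
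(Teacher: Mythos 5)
Your proof is correct, and it differs from the paper's in one genuine respect. Both arguments share the same skeleton: first establish the intermediate bound $\Vert A-C_AC_A^{\dagger}A\Vert \leq \Vert A(I_n-P_{\Pi_C,N_1})\Vert$, then insert $I_n-X^{\dagger}X$ via the identity $X(I_n-P_{\Pi_C,N_1})=0$ and finish by submultiplicativity; your second step is essentially verbatim what the paper does. Where you diverge is in how the intermediate bound is obtained. The paper introduces a second oblique projector $P_{\Pi_C,N_2}=\Pi_C(C_A^TC_A)^{-1}C_A^TA$ satisfying $AP_{\Pi_C,N_2}=C_AC_A^{\dagger}A$, verifies the composition identity $P_{\Pi_C,N_2}P_{\Pi_C,N_1}=P_{\Pi_C,N_1}$, and uses it to factor $A(I_n-P_{\Pi_C,N_2})=(I_m-C_AC_A^{\dagger})A(I_n-P_{\Pi_C,N_1})$, after which $\Vert I_m-C_AC_A^{\dagger}\Vert=1$ gives the bound. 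You instead observe that $AP_{\Pi_C,N_1}=C_AW$ with $W=(X\Pi_C)^{-1}X$ is a feasible competitor whose columns lie in ${\rm range}(C_A)$, and invoke the Pythagorean optimality of the orthogonal projection $C_AC_A^{\dagger}A$ among all such competitors. Your route is the more standard and elementary one (it is the Halko--Martinsson--Tropp-style optimality argument, applied per vector, and it works unchanged in Frobenius norm); the paper's route makes the projector algebra explicit, which it then reuses in later proofs (e.g., Theorems \ref{theorem2} and \ref{theorem10}, where analogous composition identities such as $P_{Y_1,N_4}P_{AR_A^T,N_6}=P_{Y_1,N_4}$ are the working tool). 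Both yield exactly the same inequality, so the choice is one of style and of what machinery one wants available downstream.
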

\begin{proof}
	We first consider the case for the matrix $A$. Since $C_A=A(:,J)=A\Pi_C$ is full column rank, we find that
	\begin{align*}
		C_AC_A^{\dagger}A=A\Pi_C(C_A^TC_A)^{-1}C_A^TA=AP_{\Pi_C,N_2},
	\end{align*}
where $P_{\Pi_C,N_2}=\Pi_C(C_A^TC_A)^{-1}C_A^TA$. Considering the relationship between $P_{\Pi_C,N_2}$ and $P_{\Pi_C,N_1}$, 
\begin{align*}
	P_{\Pi_C,N_2}P_{\Pi_C,N_1}&=\Pi_C(C_A^TC_A)^{-1}C_A^TA\Pi_C(X\Pi_C)^{-1}X \\ &=\Pi_C(X\Pi_C)^{-1}X=P_{\Pi_C,N_1}, 
\end{align*}
we get
\begin{align*}
 A(I_n-P_{\Pi_C,N_2})&=A(I_n-P_{\Pi_C,N_2})(I_n-P_{\Pi_C,N_1}) \\ &=(I_m-C_AC_A^{\dagger})A(I_n-P_{\Pi_C,N_1}).
\end{align*}
Hence, 
\begin{align*}
 \Vert A-C_AV^T \Vert&=\Vert (I_m-C_AC_A^{\dagger})A \Vert = \Vert A(I_n-P_{\Pi_C,N_2}) \Vert \\ &\leq \Vert I_m-C_AC_A^{\dagger} \Vert \Vert A(I_n-P_{\Pi_C,N_1}) \Vert  \\
	&=\Vert A(I_n-P_{\Pi_C,N_1}) \Vert 
	=\Vert A(I_n-X^{\dagger}X)(I_n-P_{\Pi_C,N_1}) \Vert \\
	& \leq 	\Vert A(I_n-X^{\dagger}X) \Vert \Vert I_n-P_{\Pi_C,N_1} \Vert,
\end{align*}
where the third equality holds because $I_m-C_AC_A^{\dagger}$ is an orthogonal projector and the last equality relies on 
$XP_{\Pi_C,N_1}=X$.

For the case of the matrix $B$, we can similarly define 
$P_{\Pi_C,N_3}=\Pi_C(C_B^TC_B)^{-1}C_B^TB$
and get 
$P_{\Pi_C,N_3}P_{\Pi_C,N_1}=P_{\Pi_C,N_1}$.
Thus, along the similar line, (\ref{column B}) can be obtained.
\end{proof}

Next, we 
discuss how to select the rows of $A$ and $B$, respectively. The idea is to find 
the row index vectors $I_A\subset [m]$ and $I_B\subset [d]$ 
from $C_A$ and $C_B$. 
To this end, we consider the exact CPQRs of the transposes of $C_A$ and $C_B$:
\begin{align*}
	\underset{l\times m}{C_A^T}\begin{bmatrix}
		\underset{m\times l}{\Pi_{Ar}}, \underset{m\times (m-l)}{\Pi_{Ar}^c}
	\end{bmatrix}&=\underset{l\times l}{Q_A} \begin{bmatrix}
	\underset{l\times l}{T_{A1}}, \underset{l\times (m-l)}{T_{A2}}
\end{bmatrix}=\underset{l\times l}{A_s^T}\begin{bmatrix}
\underset{l\times l}{I_l}, \underset{l\times (m-l)}{T_{A1}^{-1}T_{A2}}
\end{bmatrix}, \\
\underset{l\times d}{C_B^T}\begin{bmatrix}
	\underset{d\times l}{\Pi_{Br}}, \underset{d\times (d-l)}{\Pi_{Br}^c}
\end{bmatrix}&=\underset{l\times l}{Q_B} \begin{bmatrix}
	\underset{l\times l}{T_{B1}}, \underset{l\times (d-l)}{T_{B2}}
\end{bmatrix}=\underset{l\times l}{B_s^T}\begin{bmatrix}
	\underset{l\times l}{I_l}, \underset{l\times (d-l)}{T_{B1}^{-1}T_{B2}}
\end{bmatrix},
\end{align*}
where $\Pi_{Ar}=I_m(:,I_A)$, $\Pi_{Br}=I_d(:,I_B)$, 
$Q_A$ and $ Q_B$ are orthogonal, 
and $T_{A1}$ and $ T_{B1}$ are invertible and upper triangular due to $C_A$ and $C_B$ being full column rank. 

Further, the exact row IDs of $C_A$ and $C_B$ are given by
\begin{align*}
	C_A=\Pi_{Ar}(Q_AT_{A1})^T:=W_AA_s,\  C_B=\Pi_{Br}(Q_BT_{B1})^T:=W_BB_s.
\end{align*}
Here, $A_s=(Q_AT_{A1})^T$ and $B_s=(Q_BT_{B1})^T$ consist of the rows of $C_A$ and $C_B$, respectively.
By denoting $R_A=A_sV^T$ and $R_B=B_sV^T$, we have
\begin{align*}
	W_AA_sV^T=C_AV^T=C_AC_A^{\dagger}A=W_AR_A=(	W_AA_s)A_s^{\dagger}(A_sV^T)=C_AA_s^{\dagger}R_A, \\ 
	W_BB_sV^T=C_BV^T=C_BC_B^{\dagger}B=W_BR_B=(	W_BB_s)B_s^{\dagger}(B_sV^T)=C_BB_s^{\dagger}R_B.
\end{align*}
Thus, combining with the results that $C_A$ and $ C_B$ are full column rank and $R_A$ and $ R_B$ are full row rank implies 
\begin{align*}
	A_s^{\dagger}=C_A^{\dagger}AR_A^{\dagger}:=M_A,\  B_s^{\dagger}=C_B^{\dagger}BR_B^{\dagger}:=M_B.
\end{align*}

Therefore, we can have 
the randomized algorithm for the CUR decomposition of matrix pair, i.e., 
Algorithm \ref{alg2}. 

\begin{algorithm}[ht] 
	\caption{Randomized algorithm for the CUR decomposition of $(A,B)$} 
	\label{alg2}
	\hspace*{0.02in} {\bf Input:} 
	$A\in \mathbb{R}^{m \times n}$, $B\in \mathbb{R}^{d \times n}$, the target rank $k <n$, and the sample size $l=k+p$.\\
	\hspace*{0.02in} {\bf Output:} 
	$C_A$, $R_A$, $M_A$, 
	$C_B$, $R_B$, and $M_B$.
	\begin{algorithmic}[1]
		\State Perform Algorithm \ref{alg1} on the matrix pair $(A,B)$ to obtain $C_A=A(:,J)$ and $C_B=B(:,J)$ with $\lvert J\rvert =l$.
		
		\State Compute the exact CPQRs of $C_A^T$ and $C_B^T$, respectively, to obtain $R_A=A(I_A,:)$ and $R_B=B(I_B,:)$ with $\lvert I_A\rvert=\lvert I_B\rvert =l$.
		\State 	Compute $M_A=C_A^{\dagger}AR_A^{\dagger}$ and  $M_B=C_B^{\dagger}BR_B^{\dagger}$.
	\end{algorithmic}
\end{algorithm}

To present the expectation error bounds of Algorithm \ref{alg2}, the following lemma is necessary. 

\begin{lemma} \cite[Theorem 10.5 and 10.6]{halko2011finding} \label{lem-expectation}
	Given a matrix $A\in \mathbb{R}^{m \times n}$, the target rank $k$ and the oversampling parameter $p$ satisfying $k+p \leq {\rm min}\{m,n\}$, and a standard Gaussian matrix $\Omega \in \mathbb{R}^{(k+p) \times m}$, the column sketch $X=\Omega A$ satisfies
	\begin{align*}
		&\mathbb{E}\Vert  A(I_n-X^\dagger X)  \Vert \leq 
		\left( 1+\sqrt{\frac{k}{p-1}} \right) \sigma_{k+1}(A) +\frac{{\rm e}\sqrt{k+p}}{p}\left( \sum_{j>k}^{}\sigma_j^2(A) \right)^{1/2}, \\
		&\mathbb{E}\Vert  A(I_n-X^\dagger X)  \Vert_F \leq \left( 1+\sqrt{\frac{k}{p-1}} \right)^{1/2}\left( \sum_{j>k}^{}\sigma_j^2(A) \right)^{1/2}.
	\end{align*}
\end{lemma}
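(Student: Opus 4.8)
The plan is to recognize the statement as the classical average-case bound of Halko, Martinsson, and Tropp, stated here for the row sketch $X=\Omega A$ rather than for a column sketch. The first thing I would do is transpose the problem. Since $X^{\dagger}X$ is the orthogonal projector onto $\mathrm{range}(X^T)=\mathrm{range}(A^T\Omega^T)$, and orthogonal projectors are symmetric, we have $\Vert A(I_n-X^{\dagger}X)\Vert=\Vert(I_n-P_{A^T\Omega^T})A^T\Vert$, with the identical identity for the Frobenius norm. Writing $B=A^T$ and $\tilde{\Omega}=\Omega^T$, the quantity of interest becomes $\Vert(I-P_{B\tilde{\Omega}})B\Vert$, i.e. the residual of the randomized range finder applied to $B$ with Gaussian test matrix $\tilde{\Omega}$. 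Because $\sigma_j(B)=\sigma_j(A)$, the two claimed bounds are exactly Theorems 10.5 and 10.6 of \cite{halko2011finding}, so it suffices to reproduce their argument.

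To establish those bounds directly I would proceed in three stages. First, a deterministic estimate: letting $B=U\Sigma V^T$ be an SVD and partitioning $\Sigma=\mathrm{diag}(\Sigma_1,\Sigma_2)$ so that $\Sigma_1$ carries the top $k$ singular values, set $\Omega_1=V_1^T\tilde{\Omega}\in\mathbb{R}^{k\times(k+p)}$ and $\Omega_2=V_2^T\tilde{\Omega}$. Assuming $\Omega_1$ has full row rank, one shows
\begin{align*}
\Vert(I-P_{B\tilde{\Omega}})B\Vert^2\le\Vert\Sigma_2\Vert^2+\Vert\Sigma_2\Omega_2\Omega_1^{\dagger}\Vert^2,
\end{align*}
and the analogous inequality in the Frobenius norm. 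Second, I would invoke the rotational invariance of the Gaussian distribution: since $V$ is orthogonal, $V^T\tilde{\Omega}$ is again standard Gaussian, so $\Omega_1$ and $\Omega_2$ are independent standard Gaussian matrices. Third, I would take expectations.

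For the Frobenius bound I would condition on $\Omega_1$ and use $\mathbb{E}\Vert\Sigma_2\Omega_2\Omega_1^{\dagger}\Vert_F^2=\Vert\Sigma_2\Vert_F^2\,\mathbb{E}\Vert\Omega_1^{\dagger}\Vert_F^2$ together with the identity $\mathbb{E}\Vert\Omega_1^{\dagger}\Vert_F^2=k/(p-1)$ for a $k\times(k+p)$ Gaussian; summing and applying Jensen's inequality $\mathbb{E}\Vert\cdot\Vert_F\le(\mathbb{E}\Vert\cdot\Vert_F^2)^{1/2}$ produces the factor $(1+k/(p-1))^{1/2}$. For the spectral bound I would first take the conditional expectation over $\Omega_2$ with $\Omega_1$ fixed, using the Gaussian estimate $\mathbb{E}\Vert S\Omega_2 T\Vert\le\Vert S\Vert\Vert T\Vert_F+\Vert S\Vert_F\Vert T\Vert$ with $S=\Sigma_2$ and $T=\Omega_1^{\dagger}$, and then take the expectation over $\Omega_1$ using $\mathbb{E}\Vert\Omega_1^{\dagger}\Vert_F\le\sqrt{k/(p-1)}$ and $\mathbb{E}\Vert\Omega_1^{\dagger}\Vert\le{\rm e}\sqrt{k+p}/p$. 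Combining with $\Vert\Sigma_2\Vert=\sigma_{k+1}$ and $\Vert\Sigma_2\Vert_F=(\sum_{j>k}\sigma_j^2)^{1/2}$ delivers the claimed spectral bound.

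The main obstacle is supplying the two nontrivial Gaussian-analytic facts, namely the exact value $\mathbb{E}\Vert\Omega_1^{\dagger}\Vert_F^2=k/(p-1)$ and the mean control $\mathbb{E}\Vert\Omega_1^{\dagger}\Vert\le{\rm e}\sqrt{k+p}/p$; both rest on the distribution of the smallest singular value of a rectangular Gaussian matrix and on concentration of its pseudoinverse. These are precisely the preparatory lemmas developed in Section~10 of \cite{halko2011finding}, so, since the result is quoted verbatim, I would cite them rather than reconstruct the extreme-singular-value estimates from scratch.
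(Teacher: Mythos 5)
The paper offers no proof of this lemma at all---it is quoted directly from \cite[Theorems 10.5 and 10.6]{halko2011finding}---and your reconstruction is correct and follows exactly the argument of that source: the deterministic bound of \cite[Theorem 9.1]{halko2011finding}, rotational invariance of the Gaussian distribution, and conditional expectations via Propositions 10.1--10.2. Your one genuine addition is the transposition step identifying $\Vert A(I_n-X^{\dagger}X)\Vert$ with $\Vert (I-P_{A^T\Omega^T})A^T\Vert$, which is precisely the observation needed to align the paper's row-sketch formulation $X=\Omega A$ with the column-space range-finder setting in which the cited theorems are stated, and you handle it correctly.
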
  



\begin{theorem} \label{theorem-err1}
	Let $C_A\in \mathbb{R}^{m \times l}, C_B\in \mathbb{R}^{d \times l}, R_A\in \mathbb{R}^{l \times n}, R_B\in \mathbb{R}^{l \times n}, M_A\in \mathbb{R}^{l \times l}$, and $M_B\in \mathbb{R}^{l \times l}$ be obtained by Algorithm \ref{alg2}. Let $k$ be the target rank and $p$ be the oversampling parameter. 
	Then 
\begin{align*}
	{\rm max}\{ \mathbb{E}\Vert A-&C_AM_AR_A \Vert, \mathbb{E}\Vert B-C_BM_BR_B \Vert \} \leq \sqrt{1+(n-k-p)4^{k+p-1}}    \\
& \times\left[ \left( 1+\sqrt{\frac{k}{p-1}} \right) \sigma_{k+1}\begin{bmatrix}
	A \\ B
\end{bmatrix} +\frac{{\rm e}\sqrt{k+p}}{p}\left( \sum_{j>k}^{}\sigma_j^2\begin{bmatrix}
A \\ B
\end{bmatrix} \right)^{1/2} \right], \\
 	{\rm max}\{ \mathbb{E}\Vert A-&C_AM_AR_A \Vert_F, \mathbb{E}\Vert B-C_BM_BR_B \Vert_F \} \leq
 	\sqrt{1+(n-k-p)4^{k+p-1}}    \\
 	&\times\left( 1+\sqrt{\frac{k}{p-1}} \right)^{1/2}\left( \sum_{j>k}^{}\sigma_j^2\begin{bmatrix}
 		A \\ B
 	\end{bmatrix} \right)^{1/2}.
\end{align*}
\end{theorem}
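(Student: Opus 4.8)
The plan is to reduce the CUR error to the column-ID error already controlled by Theorem \ref{theorem1}, then to split the resulting bound into a deterministic interpolation factor and a random sketching factor, bound each, and finally take expectations.

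First I would observe that the middle factor collapses the CUR product back to the column ID. Since $A_s=(Q_AT_{A1})^T$ is a square invertible $l\times l$ matrix, $M_A=A_s^{\dagger}=A_s^{-1}$, and together with $C_A=W_AA_s$ and $R_A=A_sV^T$ this gives $C_AM_AR_A=C_AA_s^{-1}A_sV^T=C_AV^T$; likewise $C_BM_BR_B=C_BV^T$. Hence $\Vert A-C_AM_AR_A\Vert=\Vert A-C_AV^T\Vert$ and $\Vert B-C_BM_BR_B\Vert=\Vert B-C_BV^T\Vert$, with the identical statements in $\Vert\cdot\Vert_F$, so Theorem \ref{theorem1} applies verbatim and bounds each error by $\Vert I_n-P_{\Pi_C,N_1}\Vert$ times $\Vert A(I_n-X^{\dagger}X)\Vert$ (respectively the $B$-term).

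Next I would control the deterministic interpolation factor $\Vert I_n-P_{\Pi_C,N_1}\Vert$. Setting $S=T_1^{-1}T_2$ and using $X=\tilde Q T_1[I_l,\,S]\Pi^T$ together with $\Pi_C=\Pi\left[\begin{smallmatrix}I_l\\0\end{smallmatrix}\right]$, a short computation in the permuted coordinates gives $(I_n-P_{\Pi_C,N_1})\Pi=\Pi\begin{bmatrix}0&-S\\0&I_{n-l}\end{bmatrix}$, whence $\Vert I_n-P_{\Pi_C,N_1}\Vert=\sqrt{1+\Vert S\Vert^2}$. It then remains to bound $\Vert S\Vert$, and this is where the Businger--Golub column-pivoting property of the CPQR enters: pivoting guarantees $\lvert(T_1)_{ii}\rvert\ge\lvert(T_1)_{ij}\rvert$ for $j\ge i$ and $(T_1)_{ii}^2\ge\sum_{q\ge i}(T_2)_{qj}^2$, and a back-substitution/induction argument on $T_1S=T_2$ yields the growth estimate that each column of $S$ has Euclidean norm at most $2^{k+p-1}$. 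Consequently $\Vert S\Vert\le\Vert S\Vert_F\le\sqrt{n-k-p}\,2^{k+p-1}$ (recall $l=k+p$) and $\Vert I_n-P_{\Pi_C,N_1}\Vert\le\sqrt{1+(n-k-p)4^{k+p-1}}$. I expect this exponential growth estimate to be the main obstacle: it is the only place the $4^{k+p-1}$ factor originates, and pinning down the per-column constant so that it matches the stated bound requires careful use of the pivoting inequalities.

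Finally I would take expectations. The factor $\sqrt{1+(n-k-p)4^{k+p-1}}$ is deterministic (it holds for every realization of $\Omega$), so it pulls out of $\mathbb{E}$. For the random factor, the key point is that $X=\Omega\begin{bmatrix}A\\B\end{bmatrix}$ is exactly the column sketch of the stacked matrix $\begin{bmatrix}A\\B\end{bmatrix}\in\mathbb{R}^{(m+d)\times n}$ by the Gaussian $\Omega\in\mathbb{R}^{(k+p)\times(m+d)}$, so Lemma \ref{lem-expectation} applies with the stacked matrix in the role of $A$. Since $A$ (resp. $B$) is a row-submatrix of the stack, deleting rows cannot increase either norm, giving $\Vert A(I_n-X^{\dagger}X)\Vert\le\Vert\begin{bmatrix}A\\B\end{bmatrix}(I_n-X^{\dagger}X)\Vert$ and the same inequality in $\Vert\cdot\Vert_F$. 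Applying Lemma \ref{lem-expectation} to $\begin{bmatrix}A\\B\end{bmatrix}$ then bounds $\mathbb{E}\Vert\begin{bmatrix}A\\B\end{bmatrix}(I_n-X^{\dagger}X)\Vert$ by the bracketed expression in the statement; the identical chain bounds the $B$-term, and taking the maximum over the two terms (and likewise in the Frobenius norm) yields both asserted inequalities.
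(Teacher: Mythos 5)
Your proposal follows the paper's proof in all structural respects: you collapse $C_AM_AR_A$ to the column ID $C_AV^T$ so that Theorem \ref{theorem1} applies verbatim (the paper does this implicitly, via the identity $M_A=A_s^{\dagger}$ established just before Algorithm \ref{alg2}, and you are right to make it explicit); you pull the deterministic interpolation factor out of the expectation; you use the row-deletion inequality $\Vert A(I_n-X^{\dagger}X)\Vert\leq\bigl\Vert\bigl[\begin{smallmatrix}A\\ B\end{smallmatrix}\bigr](I_n-X^{\dagger}X)\bigr\Vert$; and you apply Lemma \ref{lem-expectation} to the stacked matrix. All of that is correct and is exactly what the paper does.

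The genuine gap is in the one place where you deviate: instead of citing Lemma 3.2 of \cite{dong2021simpler} for $\Vert I_n-P_{\Pi_C,N_1}\Vert\leq\sqrt{1+(n-k-p)4^{k+p-1}}$, as the paper does, you attempt to re-derive it, and the crucial step fails. Your identity $\Vert I_n-P_{\Pi_C,N_1}\Vert=\sqrt{1+\Vert S\Vert^2}$ with $S=T_1^{-1}T_2$ is fine, but the claimed per-column bound $\Vert S(:,j)\Vert_2\leq 2^{l-1}$ (with $l=k+p$) is not a consequence of the Businger--Golub pivoting inequalities; it is false. Back substitution with those inequalities gives only the entrywise bounds $\lvert S_{ij}\rvert\leq 2^{l-i}$, and these can be attained simultaneously. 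For $l=2$ take
\begin{align*}
T_1=\begin{bmatrix}1 & \sqrt{1-\epsilon^2}\\ 0 & \epsilon\end{bmatrix},\qquad
T_2=\begin{bmatrix}-\sqrt{1-\epsilon^2}\\ \epsilon\end{bmatrix};
\end{align*}
every pivoting condition holds (all column norms tie at step one, the residuals tie at step two, and a small perturbation makes the pivoting strict), yet $S=T_1^{-1}T_2=\bigl(-2\sqrt{1-\epsilon^2},\,1\bigr)^T$ has norm tending to $\sqrt{5}>2=2^{l-1}$. The best your back-substitution argument can deliver per column is $\bigl(\sum_{i=0}^{l-1}4^{i}\bigr)^{1/2}=\sqrt{(4^{l}-1)/3}$, which only yields $\Vert I_n-P_{\Pi_C,N_1}\Vert\leq\sqrt{1+(n-l)(4^{l}-1)/3}$, i.e., the theorem with a constant inflated by up to $\sqrt{4/3}$, not the stated factor. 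To get the factor exactly as stated you must, like the paper, invoke \cite[Lemma 3.2]{dong2021simpler} as a black box. (Note, incidentally, that in the example above $\Vert I_n-P_{\Pi_C,N_1}\Vert\to\sqrt{6}$ while the stated factor is $\sqrt{1+(3-2)\cdot 4}=\sqrt{5}$; since your crux step is precisely where this constant originates, the cited lemma's own proof deserves an equally careful reading.)
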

\begin{proof}
	By virtue of  Lemma 3.2 in \cite{dong2021simpler}, we have 
	\begin{align*}
  \Vert I_n-P_{\Pi_C,N_1} \Vert \leq \sqrt{1+(n-k-p)4^{k+p-1}}.
	\end{align*}
	Thus, combining the fact 
	\begin{align*}
		&{\rm max}\{\Vert A(I_n-X^{\dagger}X) \Vert, \Vert B(I_n-X^{\dagger}X) \Vert\} \leq  \left \Vert \begin{bmatrix}
			A \\B
		\end{bmatrix}(I_n-X^{\dagger}X) \right \Vert
	\end{align*}
	with Theorem \ref{theorem1} and Lemma \ref{lem-expectation} 
	implies the desired results. 
\end{proof}

\begin{remark}
	The probability error bounds for Algorithm \ref{alg2} can be similarly obtained by using \cite[Theorem 10.7 and 10.8]{halko2011finding}. For brevity, we omit them.
\end{remark}

\subsection{Pass-efficient randomized algorithm}
Considering that 
the matrix pair $(A,B)$ may be very expensive to visit 
in practise, 
we now investigate the pass-efficient randomized algorithm for CUR decomposition. The idea is to introduce an extra random matrix to find the row index sets $I_A$ and $I_B$. Specifically, let $\Omega_1\in \mathbb{R}^{l \times n}$ be a Gaussian matrix and set $Y_1=A\Omega_1^T$ and $ Y_2=B\Omega_1^T$. Then, by the CPQRs of $Y_1^T$ and $Y_2^T$, we have 
\begin{align*}
	Y_1^T\Pi_1=Y_1^T \begin{bmatrix}
		\Pi_{1r}, \Pi_{1r}^c
	\end{bmatrix}=
	Q_1 \begin{bmatrix}
		T_{11}, T_{12} 
	\end{bmatrix}, \\
Y_2^T\Pi_2=Y_2^T \begin{bmatrix}
	\Pi_{2r},
 \Pi_{2r}^c
\end{bmatrix}=
Q_2 \begin{bmatrix}
	T_{21}, T_{22} 
\end{bmatrix},
\end{align*}
where $\Pi_{1r}=I_m(:,I_A)$ and  $\Pi_{2r}=I_d(:,I_B)$. The specific  algorithm is summarized in Algorithm \ref{alg3}.   

\begin{algorithm}[ht] 
	\caption{Pass-efficient algorithm for the CUR decomposition of $(A,B)$} 
	\label{alg3}
	\hspace*{0.02in} {\bf Input:} 
	$A\in \mathbb{R}^{m \times n}$, $B\in \mathbb{R}^{d \times n}$, the target rank $k <n$, and the sample size $l=k+p$.\\
	\hspace*{0.02in} {\bf Output:} 
	$C_A$, $R_A$, $M_A$, 
	$C_B$, $R_B$, and $M_B$. 
	\begin{algorithmic}[1]
		\State Draw two Gaussian matrices $\Omega\in \mathbb{R}^{l \times (m+d)}$ and $\Omega_1\in \mathbb{R}^{l \times n}$.
		\State In a single pass, compute $X=\Omega \begin{bmatrix}
				A \\B
			\end{bmatrix}, Y_1=A\Omega_1^T$, and $ Y_2=B\Omega_1^T$.
	    \State Compute the exact CPQR of $X$ to obtain $C_A=A(:,J)$ and $C_B=B(:,J)$ with $\lvert J\rvert =l$.	
		\State Compute the exact CPQRs of $Y_1^T$ and $Y_2^T$, respectively, to obtain $R_A=A(I_A,:)$ and $R_B=B(I_B,:)$ with $\lvert I_A\rvert=\lvert I_B\rvert =l$.	\State Compute $M_A=C_A^{\dagger}AR_A^{\dagger}$ and $M_B=C_B^{\dagger}BR_B^{\dagger}$.	
	\end{algorithmic}
\end{algorithm}
\begin{remark}\label{remark10}
In Algorithm \ref{alg3}, two passes through $(A,B)$ are required. One is used to obtain the sketches $X$, $Y_1$ and $Y_2$, and further to find the index sets $I_A, I_B$ and $J$. The other is used for
retrieving the factors of CUR decomposition. So, when only the column and row index sets are required, Algorithm \ref{alg3} only needs one pass and hence can be adapted to the streaming setting.  In contrast, implementing Algorithm \ref{alg2} needs to access $(A,B)$ at least four times. 
\end{remark}


Note that $Y_1^T\Pi_{1r} = Q_1T_{11}$ and $ Y_2^T\Pi_{2r} = Q_2T_{21}$ are invertible. Then, we can define two oblique projectors: 
$P_{Y_1,N_4}=Y_1(\Pi_{1r}^TY_1)^{-1}\Pi_{1r}^T$ and $P_{Y_2,N_5}=Y_2(\Pi_{2r}^TY_2)^{-1}\Pi_{2r}^T$.
With them, we have the following results. 
\begin{theorem} \label{theorem2}
	Let $C_A\in \mathbb{R}^{m \times l}, C_B\in \mathbb{R}^{d \times l}, R_A\in \mathbb{R}^{l \times n}, R_B\in \mathbb{R}^{l \times n}, M_A\in \mathbb{R}^{l \times l}$, and $ M_B\in \mathbb{R}^{l \times l}$ be obtained by Algorithm \ref{alg3}. Then 
	\begin{align}
		\Vert A-C_AM_AR_A \Vert &\leq 2\Vert I_n-P_{\Pi_C,N_1} \Vert \Vert A(I_n-X^{\dagger}X) \Vert+ \Vert I_m-P_{Y_1,N_4} \Vert \Vert (I_m-Y_1Y_1^{\dagger})A \Vert,  \label{single A}\\ 
		\Vert B-C_BM_BR_B \Vert &\leq 2\Vert I_n-P_{\Pi_C,N_1} \Vert \Vert B(I_n-X^{\dagger}X) \Vert+ \Vert I_d-P_{Y_2,N_5} \Vert \Vert (I_d-Y_2Y_2^{\dagger})B \Vert. \label{single B}
	\end{align}
\end{theorem}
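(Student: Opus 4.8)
The plan is to write the approximant $C_AM_AR_A$ exactly as a product of two orthogonal projections, split the error into a column part (controlled by the sketch $X$, hence by Theorem \ref{theorem1}) and a row part (controlled by the sketch $Y_1$), and then recognize the row part as a transposed instance of Theorem \ref{theorem1}. I would prove \eqref{single A} in full; \eqref{single B} follows verbatim after the replacements $(C_A,R_A,Y_1,P_{Y_1,N_4})\rightarrow(C_B,R_B,Y_2,P_{Y_2,N_5})$ and $I_m\rightarrow I_d$. First I would note that, since $C_A$ has full column rank and $R_A$ has full row rank, $C_AC_A^\dagger$ and $R_A^\dagger R_A=:P_{R_A^T}$ are orthogonal projectors, so $M_A=C_A^\dagger AR_A^\dagger$ gives the exact identity $C_AM_AR_A=C_AC_A^\dagger AR_A^\dagger R_A=(C_AC_A^\dagger)A\,P_{R_A^T}$. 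With $V^T=C_A^\dagger A$ so that $C_AV^T=C_AC_A^\dagger A$, I would then use the two-step split
\begin{align*}
A-C_AM_AR_A&=(A-C_AV^T)+C_AC_A^\dagger A\,(I_n-P_{R_A^T})\\
&=(A-C_AV^T)+A(I_n-P_{R_A^T})-(I_m-C_AC_A^\dagger)A(I_n-P_{R_A^T}),
\end{align*}
where the second line rewrites $C_AC_A^\dagger A=A-(I_m-C_AC_A^\dagger)A$. Taking norms and bounding the last summand by $\|(I_m-C_AC_A^\dagger)A\|=\|A-C_AV^T\|$ (using that $I_m-C_AC_A^\dagger$ is an orthogonal projector and $\|I_n-P_{R_A^T}\|\le1$) yields
\begin{align*}
\|A-C_AM_AR_A\|\le 2\,\|A-C_AV^T\|+\|A(I_n-P_{R_A^T})\|,
\end{align*}
which is exactly where the factor $2$ on the column term originates.

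The column term is then immediate from Theorem \ref{theorem1}, which gives $\|A-C_AV^T\|\le\|I_n-P_{\Pi_C,N_1}\|\,\|A(I_n-X^\dagger X)\|$; after the doubling this is precisely the first summand of \eqref{single A}. The genuine work, and the step I expect to be the main obstacle, is the row term $\|A(I_n-P_{R_A^T})\|=\|A-AR_A^\dagger R_A\|$, which must be converted into $\|I_m-P_{Y_1,N_4}\|\,\|(I_m-Y_1Y_1^\dagger)A\|$. The key observation is that the row selection is nothing but the column machinery of Theorem \ref{theorem1} applied to $A^T$: the sketch $Y_1^T=\Omega_1A^T$ is a Gaussian column sketch of $A^T$, its CPQR selects the columns $\Pi_{1r}=I_m(:,I_A)$ of $A^T$, the selected-column matrix is $A^T\Pi_{1r}=R_A^T$, and the oblique projector $\Pi_{1r}(Y_1^T\Pi_{1r})^{-1}Y_1^T$ built from this sketch is exactly $P_{Y_1,N_4}^T$.

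Consequently, applying Theorem \ref{theorem1} to $A^T$ (as a single matrix with Gaussian $\Omega_1$) and transposing the resulting inequality gives $\|A-AR_A^\dagger R_A\|\le\|I_m-P_{Y_1,N_4}\|\,\|(I_m-Y_1Y_1^\dagger)A\|$, once one checks $\|I_m-P_{Y_1,N_4}^T\|=\|I_m-P_{Y_1,N_4}\|$ and $(Y_1^T)^\dagger Y_1^T=(Y_1Y_1^\dagger)^T$. The delicate point is to make this identification rigorous rather than heuristic; the cleanest way I would present it is as the single self-contained identity
\begin{align*}
A(I_n-P_{R_A^T})=(I_m-P_{Y_1,N_4})(I_m-Y_1Y_1^\dagger)\,A\,(I_n-P_{R_A^T}),
\end{align*}
which one verifies from two elementary facts: $P_{Y_1,N_4}A(I_n-P_{R_A^T})=0$, because $\Pi_{1r}^TA=R_A$ and $R_AR_A^\dagger=I_l$; and $(I_m-P_{Y_1,N_4})(I_m-Y_1Y_1^\dagger)=I_m-P_{Y_1,N_4}$, because $P_{Y_1,N_4}Y_1=Y_1$ forces $(I_m-P_{Y_1,N_4})Y_1Y_1^\dagger=0$. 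Taking norms of this identity and using $\|I_n-P_{R_A^T}\|\le1$ delivers the row bound. Substituting the column and row estimates into the displayed inequality of the first paragraph completes the proof of \eqref{single A}, and the matrix-$B$ case is identical.
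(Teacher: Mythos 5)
Your proof is correct and follows essentially the same route as the paper's: the same split of $\Vert A-C_AM_AR_A\Vert$ into a doubled column-ID error (bounded via Theorem \ref{theorem1}) plus the row term $\Vert A-AR_A^{\dagger}R_A\Vert$, which the paper also controls through the oblique projector $P_{Y_1,N_4}$ using exactly your two facts, namely that $P_{Y_1,N_4}$ annihilates $A(I_n-R_A^{\dagger}R_A)$ and that $(I_m-P_{Y_1,N_4})Y_1=0$. The only cosmetic differences are that the paper obtains the initial split by citing Lemma 3.1 of Voronin and Martinsson rather than your direct algebraic identity, and it routes the row bound through an intermediate oblique projector $P_{AR_A^T,N_6}=AR_A^T(R_AR_A^T)^{-1}\Pi_{1r}^T$ instead of your single combined identity.
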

\begin{proof}
	We just consider the error bound \eqref{single A} for $A$. 
 As the proof in \cite[Lemma 3.1]{voronin2017efficient}, we have
	\begin{align*}
		\Vert A-C_AM_AR_A \Vert \leq \Vert A-C_AV^T \Vert + \Vert A-W_AR_A \Vert.
	\end{align*}
Since the bound of the first term in the right-hand side of the above inequality has been deduced in Theorem \ref{theorem1}, we next study the bound of the second term. 

On one hand, we have 
\begin{align}
    \Vert A-W_AR_A \Vert & = \Vert A-C_AC_A^{\dagger}AR_A^{\dagger}R_A \Vert \notag \\
    & = \Vert A-C_AC_A^{\dagger}A+C_AC_A^{\dagger}A-C_AC_A^{\dagger}AR_A^{\dagger}R_A \Vert \notag \\
    & \leq \Vert A-C_AC_A^{\dagger}A \Vert+\Vert C_AC_A^{\dagger} \Vert \cdot \Vert A-AR_A^{\dagger}R_A \Vert \notag \\
    & =\Vert A-C_AC_A^{\dagger}A \Vert+\Vert A-AR_A^{\dagger}R_A \Vert,
    \label{AWR2}
\end{align}
where the last equality holds because $C_AC_A^{\dagger}$ is an orthogonal projector. On the other hand, considering that $R_A=\Pi_{1r}^TA$ is full row rank, using the property of Moore-Penrose inverse, we obtain
\begin{align*}
 AR_A^{\dagger}R_A=AR_A^T(R_AR_A^T)^{-1}\Pi_{1r}^TA:=P_{AR_A^T,N_6}A,
\end{align*}
where 
$P_{AR_A^T,N_6}=AR_A^T(R_AR_A^T)^{-1}\Pi_{1r}^T$.
It is easy to find that $P_{Y_1,N_4}P_{AR_A^T,N_6}=P_{Y_1,N_4}$.
Hence, 
\begin{align}
 \Vert A-AR_A^{\dagger}R_A \Vert   &= \Vert  A-P_{AR_A^T,N_6}A \Vert \notag =\Vert  (I_m-P_{Y_1,N_4})(I_m-P_{AR_A^T,N_6})A \Vert \\ &=\Vert  (I_m-P_{Y_1,N_4})A(I_n-R_A^{\dagger}R_A) \Vert \notag \\ & \leq \Vert  (I_m-P_{Y_1,N_4})A\Vert \Vert I_n -R_A^{\dagger}R_A \Vert  = \Vert  (I_m-P_{Y_1,N_4})A\Vert \notag \\ & \leq \Vert I_m-P_{Y_1,N_4} \Vert \Vert (I_m-Y_1Y_1^{\dagger})A \Vert, \label{AWR1}
\end{align}
where the last equality holds because $I_n-R_A^{\dagger}R_A$ is an orthogonal projector and the last inequality relies on the following result
\begin{align*}
    P_{Y_1,N_4}Y_1-Y_1=0.
\end{align*}
Thus, (\ref{single A}) can be obtained by combining  (\ref{AWR2}) and (\ref{AWR1}) with (\ref{column A}). 
\end{proof}

Now, we present the expectation error bounds for Algorithm \ref{alg3}.   For brevity, we only consider the case for spectral norm. 
\begin{theorem} \label{theorem-err2}
	With the same setting as Theorem \ref{theorem2}, let $k$ be the target rank and $p$ be the oversampling parameter, and let 
	{\small\begin{align*}
		\alpha = \sqrt{1+(n-k-p)4^{k+p-1}}  
		 \left[ \left( 1+\sqrt{\frac{k}{p-1}} \right) \sigma_{k+1}\begin{bmatrix}
			A \\ B
		\end{bmatrix} +\frac{{\rm e}\sqrt{k+p}}{p}\left( \sum_{j>k}^{}\sigma_j^2\begin{bmatrix}
			A \\ B
		\end{bmatrix} \right)^{1/2} \right].
	\end{align*}}
	Then 
	\begin{align*}
		\mathbb{E}\Vert A-C_AM_AR_A \Vert &\leq 2\alpha + \sqrt{1+(m-k-p)4^{k+p-1}}  \\
  &\times\left[ \left( 1+\sqrt{\frac{k}{p-1}} \right) \sigma_{k+1}(A) +\frac{{\rm e}\sqrt{k+p}}{p}\left( \sum_{j>k}^{}\sigma_j^2(A) \right)^{1/2} \right], \\
		\mathbb{E}\Vert B-C_BM_BR_B \Vert &\leq 2\alpha + \sqrt{1+(d-k-p)4^{k+p-1}}  \\
  &\times\left[ \left( 1+\sqrt{\frac{k}{p-1}} \right) \sigma_{k+1}(B) +\frac{{\rm e}\sqrt{k+p}}{p}\left( \sum_{j>k}^{}\sigma_j^2(B) \right)^{1/2} \right].
	\end{align*}
\end{theorem}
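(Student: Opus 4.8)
The plan is to combine the deterministic bound from Theorem~\ref{theorem2} with the expectation estimates from Lemma~\ref{lem-expectation}, applied to each of the three terms that appear in \eqref{single A} and \eqref{single B}. Working with \eqref{single A}, I would take expectations on both sides and bound the right-hand side term by term. The first two terms share the factor $2\Vert I_n-P_{\Pi_C,N_1}\Vert \Vert A(I_n-X^\dagger X)\Vert$; here I would reuse exactly the argument already carried out in the proof of Theorem~\ref{theorem-err1}, namely bounding the oblique-projector norm by $\sqrt{1+(n-k-p)4^{k+p-1}}$ via Lemma~3.2 of \cite{dong2021simpler}, using the monotonicity fact $\Vert A(I_n-X^\dagger X)\Vert \leq \Vert \begin{bmatrix} A\\ B\end{bmatrix}(I_n-X^\dagger X)\Vert$, and then invoking the first inequality of Lemma~\ref{lem-expectation} applied to the stacked matrix $\begin{bmatrix} A\\ B\end{bmatrix}$. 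This is precisely the quantity $\alpha$, so after multiplying by $2$ the contribution of the first two terms is $2\alpha$.

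The third term $\Vert I_m-P_{Y_1,N_4}\Vert \Vert (I_m-Y_1Y_1^\dagger)A\Vert$ requires a parallel but fresh analysis. I would first observe that $P_{Y_1,N_4}$ is the rank-$l$ oblique projector built from the CPQR of $Y_1^T=(A\Omega_1^T)^T$, so that its norm is again controlled by Lemma~3.2 of \cite{dong2021simpler}; the relevant ambient dimension is now $m$ (the row dimension of $A$), giving $\Vert I_m-P_{Y_1,N_4}\Vert \leq \sqrt{1+(m-k-p)4^{k+p-1}}$. For the second factor, I note that $Y_1=A\Omega_1^T$ is a row sketch of $A$: since $\Omega_1\in\mathbb{R}^{l\times n}$ is Gaussian, $Y_1=A\Omega_1^T$ has the same distribution structure as a column sketch of $A^T$, and $(I_m-Y_1Y_1^\dagger)A = \bigl(A^T(I_m-Y_1Y_1^\dagger)\bigr)^T$. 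Applying Lemma~\ref{lem-expectation} to $A^T$ with the Gaussian matrix $\Omega_1$ (whose transpose plays the role of the sketching matrix), and using that the singular values of $A^T$ coincide with those of $A$, yields the expectation bound for $\Vert (I_m-Y_1Y_1^\dagger)A\Vert$ in terms of $\sigma_{k+1}(A)$ and $\bigl(\sum_{j>k}\sigma_j^2(A)\bigr)^{1/2}$.

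Finally I would assemble the pieces. Taking expectations in \eqref{single A} and bounding each factor by its deterministic worst case (the projector norms are deterministic constants, independent of the Gaussian draws) before applying linearity of expectation to the random factors, I obtain
\begin{align*}
  \mathbb{E}\Vert A-C_AM_AR_A\Vert \leq 2\alpha + \sqrt{1+(m-k-p)4^{k+p-1}}\,\mathbb{E}\Vert (I_m-Y_1Y_1^\dagger)A\Vert,
\end{align*}
and substituting the expectation bound for the last factor gives the stated inequality for $A$. The bound for $B$ follows by the identical argument applied to \eqref{single B}, with $Y_2$, $P_{Y_2,N_5}$, the dimension $d$ in place of $m$, and the singular values of $B$.

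The main obstacle I anticipate is the bookkeeping around independence and the order of operations when taking expectations: the two projector norms $\Vert I_n-P_{\Pi_C,N_1}\Vert$ and $\Vert I_m-P_{Y_1,N_4}\Vert$ depend on $\Omega$ and $\Omega_1$ respectively, while the sketch-error factors also depend on these same random matrices, so strictly one cannot simply factor the expectation of a product into a product of expectations. The clean resolution—and the point I would be careful to justify—is that Lemma~3.2 of \cite{dong2021simpler} furnishes a \emph{deterministic} upper bound on each projector norm that holds for every realization of the Gaussian matrix, so those factors may be pulled out as constants before any expectation is taken, leaving only the sketch-error terms to be bounded in expectation by Lemma~\ref{lem-expectation}. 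A secondary technical point worth verifying is that the row-sketch error $\Vert(I_m-Y_1Y_1^\dagger)A\Vert$ genuinely fits the template of Lemma~\ref{lem-expectation} after transposition, which it does since Gaussian matrices are invariant in distribution under transposition and the hypothesis $k+p\leq\min\{m,n\}$ is symmetric in the two dimensions.
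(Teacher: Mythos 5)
Your proposal is correct and takes essentially the same route as the paper's own proof: both combine the deterministic bound of Theorem~\ref{theorem2} with Lemma~3.2 of \cite{dong2021simpler} for the oblique-projector norms, the stacking inequality $\max\{\Vert A(I_n-X^{\dagger}X)\Vert,\Vert B(I_n-X^{\dagger}X)\Vert\}\leq\Vert\begin{bmatrix}A\\ B\end{bmatrix}(I_n-X^{\dagger}X)\Vert$ for the column-sketch term, and Lemma~\ref{lem-expectation} (applied in transposed form to handle $(I_m-Y_1Y_1^{\dagger})A$ and $(I_d-Y_2Y_2^{\dagger})B$). The details you flag---the transposition argument for the row sketches and the fact that the projector bounds are deterministic and hence can be pulled out before taking expectations---are exactly what the paper's terse proof leaves implicit.
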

\begin{proof}	
	By virtue of  Lemma 3.2 in \cite{dong2021simpler}, we have 
	\begin{eqnarray*}
  &\Vert I_n-P_{\Pi_C,N_1} \Vert \leq \sqrt{1+(n-k-p)4^{k+p-1}}, \Vert I_m-P_{Y_1,N_4} \Vert  \leq \sqrt{1+(m-k-p)4^{k+p-1}}, \\ &\Vert I_d-P_{Y_2,N_5} \Vert  \leq \sqrt{1+(d-k-p)4^{k+p-1}}.
	\end{eqnarray*}
	Thus, combining the fact 
	\begin{align*}
		&{\rm max}\{\Vert A(I_n-X^{\dagger}X) \Vert, \Vert B(I_n-X^{\dagger}X) \Vert\} \leq  \left \Vert \begin{bmatrix}
			A \\B
		\end{bmatrix}(I_n-X^{\dagger}X) \right \Vert
	\end{align*}
	with Theorem \ref{theorem2} and Lemma \ref{lem-expectation}, the proof is completed.
\end{proof}

\subsection{GSVD-based error analyses} \label{GSVD based error analyses}
The error bounds in Theorems \ref{theorem-err1} and \ref{theorem-err2} are controlled partly by the  singular values of $\begin{bmatrix}
	A^T & B^T
\end{bmatrix}^T$. 
In this subsection, we provide the alternatives error analyses by the GSVD of the matrix pair $(A,B)$ in (\ref{gsvd}).

Let the QR decomposition of $Y^T$ be $Y^T=QR$. 
It is clear that $R\in \mathbb{R}^{n\times n}$ is nonsingular since $Y^T\in \mathbb{R}^{n\times n}$ is nonsingular. Thus, from the proof of Theorem \ref{theorem1} and the fact $(I_m-C_AC_A^\dagger )A=A-C_AM_AR_A $, the following inequality holds  
\begin{align}
	\Vert A-C_AM_AR_A  \Vert  &\leq \Vert A(I_n-X^\dagger X) (I_n-P_{\Pi_C,N_1})\Vert \notag \\
	&= \Vert A(I_n-X^\dagger X)R^{-1} R(I_n-P_{\Pi_C,N_1})\Vert \notag \\
	&\leq  \Vert A(I_n-X^\dagger X)R^{-1}\Vert \Vert R(I_n-P_{\Pi_C,N_1})\Vert.  \label{apen5}
\end{align}
Similarly, we also have $\Vert B-C_BM_BR_B  \Vert \leq  \Vert B(I_n-X^\dagger X)R^{-1}\Vert \Vert R(I_n-P_{\Pi_C,N_1})\Vert$. In the following, we bound $ \Vert A(I_n-X^\dagger X)R^{-1}\Vert$ and $\Vert B(I_n-X^\dagger X)R^{-1}\Vert$. 


Denote
\begin{align*}
	\hat{A}=AR^{-1},\ \hat{B}=BR^{-1},\textrm{ and } \hat{X}=XR^{-1},
\end{align*}
and suppose that 
${\rm range}(R^{-1}(R^{-1})^TX^T)\subset {\rm range}(X^T)$. 
Hence, combining with the equation
\begin{align*}
	\begin{bmatrix}
		A\\B
	\end{bmatrix}=\begin{bmatrix}
		U & \\ & \tilde{V}
	\end{bmatrix}\begin{bmatrix}
		\Sigma_1 \\ \Sigma_2 
	\end{bmatrix}Y^T,
\end{align*}
we have 
\begin{align*}
	\hat{X}=\Omega \begin{bmatrix}
		A \\ B
	\end{bmatrix}R^{-1}=\Omega \begin{bmatrix}
		\hat{A} \\ \hat{B}
	\end{bmatrix}.
\end{align*}
To continue the GSVD based error analyses, the following lemma is necessary. 
\begin{lemma} \cite[Ex.22]{Ben2003generalized} \label{fanxu}
    Given matrices $D_1\in \mathbb{R}^{t_1\times t_2}$ and $D_2\in \mathbb{R}^{t_2\times t_3}$, then 
    \begin{align*}   (D_1D_2)^{\dagger}=D_2^{\dagger}D_1^{\dagger}
    \end{align*} if and only if 
    \begin{align*}
        {\rm range}(D_1^TD_1D_2)\subset {\rm range}(D_2), \ {\rm range}(D_2D_2^TD_1^T)\subset {\rm range}(D_1^T).
    \end{align*}
\end{lemma}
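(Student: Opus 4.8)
The plan is to prove this classical reverse-order law (a form of Greville's theorem) through the four Penrose conditions, after recasting the two range hypotheses as orthogonal-projector identities. Writing $P_{D_2}=D_2D_2^{\dagger}$ for the orthogonal projector onto ${\rm range}(D_2)$ and $P_{D_1^T}=D_1^{\dagger}D_1$ for the one onto ${\rm range}(D_1^T)$, I first invoke the elementary dictionary ${\rm range}(N)\subset{\rm range}(M)\iff MM^{\dagger}N=N$. Under it the hypotheses read $P_{D_2}D_1^TD_1D_2=D_1^TD_1D_2$ and $P_{D_1^T}D_2D_2^TD_1^T=D_2D_2^TD_1^T$, and these two identities are the only structural facts, beyond the defining relations $D_iD_i^{\dagger}D_i=D_i$, $D_i^{\dagger}D_iD_i^{\dagger}=D_i^{\dagger}$ and the self-adjointness of the projectors, that the argument will use.

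Before verifying anything I would exploit a transpose symmetry that halves the work. Transposing the target identity gives $(D_2^TD_1^T)^{\dagger}=(D_1^T)^{\dagger}(D_2^T)^{\dagger}$, which is the same statement for the pair $(D_2^T,D_1^T)$; moreover the two projector identities above interchange under $(D_1,D_2)\mapsto(D_2^T,D_1^T)$, and this substitution swaps the third and fourth Penrose conditions of the product while fixing the first two. Consequently, once the symmetry of $D_1D_2D_2^{\dagger}D_1^{\dagger}$ is established from the second identity, the symmetry of $D_2^{\dagger}D_1^{\dagger}D_1D_2$ follows by duality from the first, and I only need to treat $D_1D_2YD_1D_2=D_1D_2$ and $YD_1D_2Y=Y$ once each.

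For the implication "$\Leftarrow$" I set $Y=D_2^{\dagger}D_1^{\dagger}$ and check the four conditions. The two symmetry conditions come out, as noted, one from each hypothesis after transposing and cancelling projectors. For the two weak-inverse conditions I would expand $D_1D_2D_2^{\dagger}D_1^{\dagger}D_1D_2=D_1P_{D_2}P_{D_1^T}D_2$ and $YD_1D_2Y=D_2^{\dagger}P_{D_1^T}P_{D_2}D_1^{\dagger}$, and reduce them to $D_1D_2$ and $Y$ respectively by feeding in the two projector identities together with $D_iD_i^{\dagger}D_i=D_i$. For the converse "$\Rightarrow$" I would use that $(D_1D_2)(D_1D_2)^{\dagger}$ and $(D_1D_2)^{\dagger}(D_1D_2)$ are automatically symmetric; substituting $(D_1D_2)^{\dagger}=Y$ forces $D_1D_2D_2^{\dagger}D_1^{\dagger}$ and $D_2^{\dagger}D_1^{\dagger}D_1D_2$ to be symmetric, and transposing these two symmetric forms and simplifying with the defining relations returns precisely the two projector identities, hence the two range inclusions.

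The step I expect to be the main obstacle is the forward verification of $D_1P_{D_2}P_{D_1^T}D_2=D_1D_2$ (the first Penrose condition, and similarly the second), because the two projectors $P_{D_2}$ and $P_{D_1^T}$ do not commute in general, so neither hypothesis used alone suffices and the cancellation must be engineered from both at once. Should this direct bookkeeping turn out to be unwieldy, a fallback is to introduce full-rank factorizations $D_1=F_1G_1$ and $D_2=F_2G_2$, push the outer factors through, and re-express both hypotheses as conditions on the single central block $G_1F_2$; but I expect the projector-identity computation to be the cleaner route.
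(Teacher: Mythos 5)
First, a framing remark: the paper does not prove this lemma at all --- it is quoted directly from Ben-Israel and Greville (the cited Exercise 22) --- so your proposal stands or falls on its own. Your architecture (verify the four Penrose conditions for $Y=D_2^{\dagger}D_1^{\dagger}$, halve the work via the duality $(D_1,D_2)\mapsto(D_2^T,D_1^T)$) is the standard route, and your dictionary between range inclusions and projector identities is correct. But your pairing of hypotheses to symmetry conditions is backwards: it is the \emph{first} identity $P_{D_2}D_1^TD_1D_2=D_1^TD_1D_2$ that yields symmetry of $D_1D_2D_2^{\dagger}D_1^{\dagger}$ (it forces $D_1^TD_1$ to commute with $P_{D_2}$, and multiplying that commutation relation by $(D_1^{\dagger})^T$ on the left and $D_1^{\dagger}$ on the right gives the symmetry), while the second identity yields symmetry of $D_2^{\dagger}D_1^{\dagger}D_1D_2$ by your duality. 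The implication you assert --- second identity $\Rightarrow$ symmetry of $D_1D_2D_2^{\dagger}D_1^{\dagger}$ --- is false: for
\begin{align*}
D_1=\begin{pmatrix}1&1&0\\0&1&0\end{pmatrix},\qquad D_2=\begin{pmatrix}1\\0\\0\end{pmatrix},
\end{align*}
the second inclusion holds, yet $D_1D_2D_2^{\dagger}D_1^{\dagger}=\begin{pmatrix}1&-1\\0&0\end{pmatrix}$ is not symmetric. This error is repairable by swapping the roles. Your worry about the first two Penrose conditions, by contrast, is unfounded: from the first identity one gets $D_1^TD_1P_{D_2}=P_{D_2}D_1^TD_1$ and, always, $D_1^TD_1P_{D_1^T}=D_1^TD_1$, whence
\begin{align*}
D_1^TD_1\,P_{D_2}P_{D_1^T}D_2=P_{D_2}D_1^TD_1P_{D_1^T}D_2=P_{D_2}D_1^TD_1D_2=D_1^TD_1D_2,
\end{align*}
and the cancellation $D_1^TD_1X=D_1^TD_1Z\Rightarrow D_1X=D_1Z$ gives the first condition; the second condition reduces, by the same kind of cancellation, to the identical identity $P_{D_1^T}P_{D_2}P_{D_1^T}P_{D_2}=P_{D_1^T}P_{D_2}$, so no extra work (and no fallback factorization) is needed.

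The fatal gap is the converse. You plan to recover the two range inclusions from the symmetry of $D_1D_2D_2^{\dagger}D_1^{\dagger}$ and of $D_2^{\dagger}D_1^{\dagger}D_1D_2$ alone, by ``transposing and simplifying''. That cannot succeed, because those two symmetries simply do not imply the inclusions: take $D_1=\begin{pmatrix}1&0\end{pmatrix}$ and $D_2=\tfrac12\begin{pmatrix}1\\ \sqrt{3}\end{pmatrix}$; both products are $1\times1$, hence trivially symmetric, yet $D_1^TD_1D_2=\tfrac12\begin{pmatrix}1\\0\end{pmatrix}\notin{\rm range}(D_2)$, and indeed $(D_1D_2)^{\dagger}=2\neq\tfrac12=D_2^{\dagger}D_1^{\dagger}$. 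In the necessity direction you must also invoke the first Penrose condition. A correct route: conditions one and three together give $D_1^TD_1D_2=P_{D_1^T}P_{D_2}\,D_1^TD_1D_2$ (write $D_1^TD_1D_2=D_1^T\Pi_1 D_1D_2$ with $\Pi_1=D_1D_2D_2^{\dagger}D_1^{\dagger}$ symmetric); then for each column $x$ of $D_1^TD_1D_2$, the chain $\Vert x\Vert=\Vert P_{D_1^T}P_{D_2}x\Vert\leq\Vert P_{D_2}x\Vert\leq\Vert x\Vert$ forces $\Vert P_{D_2}x\Vert=\Vert x\Vert$, hence $P_{D_2}x=x$, which is the first inclusion; the second follows by your duality. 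So as written your plan establishes neither direction, though both defects can be repaired inside your framework.
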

By means of Lemma \ref{fanxu}, and considering the assumption ${\rm range}(R^{-1}(R^{-1})^TX^T)\subset {\rm range}(X^T)$ and the fact ${\rm range}(X^TXR^{-1})\subset {\rm range}(R^T)$, we have $(XR^{-1})^{\dagger}=RX^{\dagger}$. 
Hence, 
\begin{align}
	\Vert \hat{A}  (I_n-\hat{X}^{\dagger}\hat{X}) \Vert &=\Vert AR^{-1} (I_n-(XR^{-1})^{\dagger}(XR^{-1})) \Vert=\Vert AR^{-1} (I_n-RX^{\dagger}XR^{-1}) \Vert \notag \\
	&=\Vert A(R^{-1}-X^{\dagger}XR^{-1}) \Vert = \Vert A(I_n-X^{\dagger}X)R^{-1} \Vert. \label{apen7}
\end{align}
For brevity, here we just discuss the case of $A$ and that of $B$ can be derived similarly. From \eqref{apen7}, it suffices to bound $\Vert \hat{A}  (I_n-\hat{X}^{\dagger}\hat{X}) \Vert$. 
In the following, we tackle the problem using the idea 
from 
\cite{halko2011finding}. 

Observe that the matrix $\hat{X}$ can be expressed as follows by partitioning  $U= \begin{bmatrix}
		U_1 , U_2 
	\end{bmatrix}, \tilde{V}=\begin{bmatrix}
		V_1 , V_2
	\end{bmatrix}$, 
 \begin{align}
 \Sigma_1=\begin{bmatrix}
		\Sigma_{11} & \\ &   \Sigma_{12}
	\end{bmatrix}, \textrm{ and }
	\Sigma_2=\begin{bmatrix}
		\Sigma_{21} & \\ &   \Sigma_{22}
	\end{bmatrix} \label{apen6}
 \end{align}
    with $\Sigma_{11}\in \mathbb{R}^{k\times k}, \Sigma_{12}\in \mathbb{R}^{(m-k)\times (n-k)}, \Sigma_{21}\in \mathbb{R}^{k\times k}$ and $\Sigma_{22}\in \mathbb{R}^{(d-k)\times (n-k)}$: 
\begin{align*}
	\hat{X}&=\Omega \begin{bmatrix}
		U_1 & U_2 & & \\
		& & V_1 & V_2
	\end{bmatrix}\begin{bmatrix}
		\Sigma_{11} & \\ & \Sigma_{12} \\ 
		\Sigma_{21} & \\ & \Sigma_{22} 
	\end{bmatrix}Q  =\Omega \begin{bmatrix}
		\hat{U_1}, \hat{U_2}, \hat{V_1}, \hat{V_2}
	\end{bmatrix}\begin{bmatrix}
		\Sigma_{11} & \\ & \Sigma_{12} \\ 
		\Sigma_{21} & \\ & \Sigma_{22} 
	\end{bmatrix}Q \notag \\ 
	&=\Omega \begin{bmatrix}
		\hat{U_1}\Sigma_{11}+\hat{V_1}\Sigma_{21}, \hat{U_2}\Sigma_{12}+\hat{V_2}\Sigma_{22}  
	\end{bmatrix}Q ,    
\end{align*} 
where 
\begin{align*}
	\hat{U_1}=\begin{bmatrix}
		U_1 \\ 0
	\end{bmatrix}, 	\hat{U_2}=\begin{bmatrix}
		U_2 \\ 0
	\end{bmatrix}, \hat{V_1}=\begin{bmatrix}
		0\\ V_1 
	\end{bmatrix}, \textrm{ and }\hat{V_2}=\begin{bmatrix}
		0\\ V_2
	\end{bmatrix}. 
\end{align*}
Considering that the case we focus on is that both $A$ and $ B$ are low rank, $\Sigma_{11}$ and $\Sigma_{22}$ may be nearly zero. 
Then 
\begin{align*}
	\hat{X} \approx \Omega \begin{bmatrix}
		\hat{V_1}\Sigma_{21}, \hat{U_2}\Sigma_{12}
	\end{bmatrix}Q=\begin{bmatrix}
		\Omega\hat{V_1}\Sigma_{21}, \Omega\hat{U_2}\Sigma_{12}
	\end{bmatrix}Q=\begin{bmatrix}
		\Gamma_1\Sigma_{21}, \Gamma_2\Sigma_{12}
	\end{bmatrix}Q,
\end{align*}
where $\Gamma_1=\Omega\hat{V_1}\in \mathbb{R}^{(k+p)\times k}$ and $\Gamma_2=\Omega\hat{U_2}$. 
Further, $\hat{X}Q^T\approx\begin{bmatrix}
	\Gamma_1\Sigma_{21}, \Gamma_2\Sigma_{12}
\end{bmatrix}$. Let $\tilde{X}=\hat{X}Q^T$, note that $\Sigma_{21}$ is invertible, and suppose that $\Gamma_1$ is full column rank. 
Then
\begin{align*}
	\hat{Z}:=\Sigma_{21}^{-1}\Gamma_1^{\dagger}\tilde{X}\approx\begin{bmatrix}
		I , \hat{F}
	\end{bmatrix},
\end{align*}
where $\hat{F}=\Sigma_{21}^{-1}\Gamma_1^{\dagger}\Gamma_2\Sigma_{12}$. It is obvious that ${\rm range}(\hat{Z}^T)\subset {\rm range}(\tilde{X}^T)$ and $\hat{Z}$ is full row rank. 

To continue to bound $\Vert \hat{A}  (I_n-\hat{X}^{\dagger}\hat{X}) \Vert$, 
some notations are necessary. Let
\begin{align*}
	&\tilde{A}=U\Sigma_1=\hat{A}Q^T,\ \tilde{B}=\tilde{V}\Sigma_2=\hat{B}Q^T,\ P_{\hat{Z}}=\hat{Z}^T(\hat{Z}\hat{Z}^T)^{-1}\hat{Z}, \\
    &P_{\tilde{X}}=\tilde{X}^T(\tilde{X}\tilde{X}^T)^{\dagger}\tilde{X}=Q\hat{X}^T(\hat{X}\hat{X}^T)^{\dagger}\hat{X}Q^T=Q\hat{X}^{\dagger}\hat{X}Q^T,
\end{align*}
where the last equality holds since $(\hat{X}\hat{X}^T)^{\dagger}=(\hat{X}^T)^{\dagger}\hat{X}^{\dagger}$. 
Note that $P_{\hat{Z}}$ and $P_{\tilde{X}}$ are orthogonal projectors. Thus,
combining ${\rm range}(\hat{Z}^T)\subset {\rm range}(\tilde{X}^T)$ with Proposition 8.5 in \cite{halko2011finding}, we have
\begin{align*}
	\Vert \hat{A}  (I_n-\hat{X}^{\dagger}\hat{X}) \Vert =
	\Vert \tilde{A} Q (I_n-\hat{X}^{\dagger}\hat{X})Q^T \Vert=\Vert \tilde{A}(I_n-P_{\tilde{X}}) \Vert 
	\leq \Vert \tilde{A}(I_n-P_{\hat{Z}}) \Vert.
\end{align*}
Further, we can obtain
\begin{align}
	\Vert \hat{A}  (I_n-\hat{X}^{\dagger}\hat{X}) \Vert^2 &\leq \Vert \tilde{A}(I_n-P_{\hat{Z}}) \Vert^2 =\Vert U\Sigma_1 (I_n-P_{\hat{Z}})\Sigma_1^TU^T \Vert \notag \\
	&=\Vert \Sigma_1 (I_n-P_{\hat{Z}})\Sigma_1^T \Vert,  \label{apen1}
\end{align}
where the first equality holds since $I_n-P_{\hat{Z}}$ is an orthogonal projector. As done in the proof of \cite[Theorem 9.1]{halko2011finding}, by the similar algebraic computation, the following holds,  


\begin{align*}
	\Sigma_1(I_n-P_{\hat{Z}})\Sigma_1^T \preccurlyeq \begin{bmatrix}
		\Sigma_{11}\hat{F}\hat{F}^T\Sigma_{11}^T & -\Sigma_{11}\left( I_k+\hat{F}\hat{F}^T \right)^{-1}\hat{F}\Sigma_{12}^T \\
		-\Sigma_{12}\hat{F}^T\left( I_k+\hat{F}\hat{F}^T \right)^{-1}\Sigma_{11}^T & \Sigma_{12}\Sigma_{12}^T
	\end{bmatrix}.
\end{align*}
Note that the matrices on the two sides of the above inequality are positive semi-definite. Hence,
Proposition 8.3 in \cite{halko2011finding} results in 
\begin{align*}
	\Vert \Sigma_1(I_n-P_{\hat{Z}})\Sigma_1^T \Vert \leq \Vert \Sigma_{11}\hat{F}\hat{F}^T\Sigma_{11}^T  \Vert + \Vert \Sigma_{12}\Sigma_{12}^T \Vert =
	\Vert \Sigma_{11}\hat{F} \Vert^2+\Vert \Sigma_{12}\Vert^2.
\end{align*}
Thus, combining (\ref{apen1}) with the above inequality implies 
\begin{align}
	\Vert \hat{A}(I_n-\hat{X}^{\dagger}\hat{X}) \Vert^2 \leq \Vert \Sigma_{11}\hat{F} \Vert^2+\Vert \Sigma_{12}\Vert^2.  \label{apinfang}
\end{align}
With the fact that for all $a,b\geq 0$, $c^2\leq a^2+b^2$ results in $c\leq a+b$, we have 
\begin{align*}
	\Vert \hat{A}(I_n-\hat{X}^{\dagger}\hat{X}) \Vert \leq  \Vert \Sigma_{11}\hat{F} \Vert +\Vert \Sigma_{12}\Vert \leq \Vert \Sigma_{11}\Sigma_{21}^{-1}\Vert \Vert \Gamma_1^{\dagger}\Gamma_2\Sigma_{12}  \Vert +\Vert \Sigma_{12} \Vert ,  
\end{align*}
where $\hat{F}=\Sigma_{21}^{-1}\Gamma_1^{\dagger}\Gamma_2\Sigma_{12}$. 

Similarly, for the case of $B$, we have
\begin{align}
	\Vert \hat{B}  (I_n-\hat{X}^{\dagger}\hat{X}) \Vert^2 &\leq \Vert \Sigma_2 (I_n-P_{\hat{Z}})\Sigma_2^T \Vert,\nonumber  \\
	\Sigma_2 (I_n-P_{\hat{Z}})\Sigma_2^T &\preccurlyeq \begin{bmatrix}
		\Sigma_{21}\hat{F}\hat{F}^T\Sigma_{21}^T & -\Sigma_{21}\left( I_k+\hat{F}\hat{F}^T \right)^{-1}\hat{F}\Sigma_{22}^T \\
		-\Sigma_{22}\hat{F}^T\left( I_k+\hat{F}\hat{F}^T \right)^{-1}\Sigma_{21}^T & \Sigma_{22}\Sigma_{22}^T
	\end{bmatrix},\nonumber\\
	\Vert \hat{B}  (I_n-\hat{X}^{\dagger}\hat{X}) \Vert^2 &\leq \Vert \Sigma_{21}\hat{F} \Vert^2 + \Vert \Sigma_{22} \Vert^2 \leq \Vert \Sigma_{21}\Sigma_{21}^{-1}\Gamma_1^{\dagger}\Gamma_2\Sigma_{12} \Vert^2 + \Vert \Sigma_{22} \Vert^2 \notag \\
	& = \Vert \Gamma_1^{\dagger}\Gamma_2\Sigma_{12} \Vert^2 + \Vert \Sigma_{22} \Vert^2, \label{bpinfang}
\end{align}
and 
\begin{align*}
	\Vert \hat{B}  (I_n-\hat{X}^{\dagger}\hat{X}) \Vert  \leq \Vert \Gamma_1^{\dagger}\Gamma_2\Sigma_{12} \Vert + \Vert \Sigma_{22} \Vert. 
\end{align*}

Therefore,
\begin{align}
\Vert A(I_n-X^\dagger X)R^{-1}\Vert &\leq \Vert \Sigma_{11}\Sigma_{21}^{-1}\Vert \Vert \Gamma_1^{\dagger}\Gamma_2\Sigma_{12}  \Vert +\Vert \Sigma_{12} \Vert, \label{apen222} \\ 
\Vert B(I_n-X^\dagger X)R^{-1}\Vert  &\leq \Vert \Gamma_1^{\dagger}\Gamma_2\Sigma_{12} \Vert + \Vert \Sigma_{22} \Vert. \label{apen322}
\end{align}
Furthermore, it is easy to see that (\ref{apinfang}) and (\ref{bpinfang}) also hold for Frobenius norm.

Note that, in the above discussions, two necessary assumptions are used. We list them  uniformly as follows. 
\begin{assumption} \label{assum}
	(1) ${\rm range}(R^{-1}(R^{-1})^TX^T)\subset {\rm range}(X^T)$; 
	(2) $\Gamma_1=\Omega \hat{V}_1$ 
 is full column rank.
\end{assumption}
\begin{remark}
    The last assumption is satisfied in high probability when $\Omega$ is a Gaussian matrix; see e.g., \cite{halko2011finding}.
\end{remark}

In the following, we present the expectation error bounds of Algorithm \ref{alg2}. Two necessary lemmas are listed firstly as follows. 
\begin{lemma}\cite[Proposition 10.1]{halko2011finding} \label{Proposition 10.1}
	For the given matrices $S$ and $N$, draw a standard Gaussian matrix $\Gamma$. Then
	\begin{align*}
		(\mathbb{E}\Vert S\Gamma N \Vert_{F}^2)^{\frac{1}{2}}=\Vert S\Vert_{F}\Vert N\Vert_{F},\ \mathbb{E}\Vert S\Gamma N \Vert\leq \Vert S \Vert \Vert N \Vert_{F}+\Vert S \Vert_{F} \Vert N \Vert.
	\end{align*}
\end{lemma}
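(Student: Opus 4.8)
The plan is to handle the two assertions by different means: the Frobenius identity is exact and follows from rotational invariance, whereas the spectral-norm bound is a one-sided estimate that calls for a Gaussian comparison argument. Throughout I would use that for orthogonal $U,W$ of compatible sizes $U^T\Gamma W$ is again a standard Gaussian matrix.

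For the first identity I would pass to the singular value decompositions $S=U_S\Sigma_S V_S^T$ and $N=U_N\Sigma_N V_N^T$. Unitary invariance of the Frobenius norm gives $\Vert S\Gamma N\Vert_F=\Vert\Sigma_S\Gamma'\Sigma_N\Vert_F$ with $\Gamma'=V_S^T\Gamma U_N$ standard Gaussian. Expanding entrywise, $\Vert\Sigma_S\Gamma'\Sigma_N\Vert_F^2=\sum_{i,j}\sigma_i(S)^2(\Gamma'_{ij})^2\sigma_j(N)^2$, and since $\mathbb{E}(\Gamma'_{ij})^2=1$ the expectation factors as $(\sum_i\sigma_i(S)^2)(\sum_j\sigma_j(N)^2)=\Vert S\Vert_F^2\Vert N\Vert_F^2$. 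Taking a square root yields the stated equality.

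For the spectral bound I would realize the norm as the supremum of a centered Gaussian process, $\Vert S\Gamma N\Vert=\sup_{\Vert u\Vert=1,\Vert v\Vert=1}\langle S^Tu,\Gamma Nv\rangle$, and set $T_1=\{S^Tu:\Vert u\Vert\le1\}$, $T_2=\{Nv:\Vert v\Vert\le1\}$, so that the relevant process is $X_{a,b}=\langle a,\Gamma b\rangle$ over $a\in T_1,b\in T_2$. Introducing independent standard Gaussian vectors $g,h$, I would compare $X$ with $Y_{a,b}=\Vert b\Vert\langle g,a\rangle+\Vert a\Vert\langle h,b\rangle$. A direct computation shows that the increment of $X$ is $\mathbb{E}(X_{a,b}-X_{a',b'})^2=\Vert ab^T-a'b'^T\Vert_F^2$, while the increment of $Y$ is a sum of two squared-norm terms, one per independent vector. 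The decisive step is the increment inequality $\mathbb{E}(X_{a,b}-X_{a',b'})^2\le\mathbb{E}(Y_{a,b}-Y_{a',b'})^2$; writing $p=\Vert a\Vert$, $q=\Vert b\Vert$ and likewise for the primed vectors, it reduces to a bilinear inequality in $\langle a,a'\rangle$ and $\langle b,b'\rangle$ whose minimum over the admissible box is attained at the extreme correlations and there equals the perfect square $(pq-p'q')^2\ge0$. Sudakov--Fernique then gives $\mathbb{E}\sup X\le\mathbb{E}\sup Y$.

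It remains to evaluate $\mathbb{E}\sup Y$. Using subadditivity of the supremum and the symmetry of the ellipsoids $T_1,T_2$, the two terms of $Y$ decouple, giving $\mathbb{E}\sup_{a,b}Y_{a,b}\le\mathrm{rad}(T_2)\,\mathbb{E}\sup_{a\in T_1}\langle g,a\rangle+\mathrm{rad}(T_1)\,\mathbb{E}\sup_{b\in T_2}\langle h,b\rangle$. I would then identify $\mathrm{rad}(T_1)=\Vert S\Vert$, $\mathrm{rad}(T_2)=\Vert N\Vert$, and $\mathbb{E}\sup_{a\in T_1}\langle g,a\rangle=\mathbb{E}\Vert Sg\Vert\le(\mathbb{E}\Vert Sg\Vert^2)^{1/2}=\Vert S\Vert_F$ by Jensen, and similarly $\mathbb{E}\sup_{b\in T_2}\langle h,b\rangle\le\Vert N\Vert_F$. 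Combining these with the comparison gives $\mathbb{E}\Vert S\Gamma N\Vert\le\Vert S\Vert\Vert N\Vert_F+\Vert S\Vert_F\Vert N\Vert$, as claimed. I expect the \emph{main obstacle} to be the spectral part --- specifically justifying the Gaussian comparison (Sudakov--Fernique) and verifying the increment inequality above --- since the Frobenius identity is routine; the whole spectral estimate is essentially Chevet's inequality, so an alternative would be to cite it directly.
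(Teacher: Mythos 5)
Your proposal is correct, but it is worth pointing out that the paper contains no proof of this statement at all: the lemma is quoted verbatim, with citation, from Proposition 10.1 of \cite{halko2011finding} and used as a black box. So your argument is not ``the same as the paper's'' or ``different from the paper's'' --- it supplies a proof where the paper (deliberately) supplies none. Measured against the source: your Frobenius-norm computation (reduce to $\Vert\Sigma_S\Gamma'\Sigma_N\Vert_F$ by the SVDs and rotational invariance, then take expectations entrywise using $\mathbb{E}(\Gamma'_{ij})^2=1$) is exactly the direct calculation behind the cited identity. For the spectral bound, even Halko--Martinsson--Tropp do not argue from scratch; they invoke Gordon's sharpening of Chevet's inequality. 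Your reconstruction of that inequality via Sudakov--Fernique is sound: the increment of $X_{a,b}=\langle a,\Gamma b\rangle$ is indeed $\Vert ab^T-a'b'^T\Vert_F^2 = p^2q^2+p'^2q'^2-2\langle a,a'\rangle\langle b,b'\rangle$, the difference $\mathbb{E}(Y_{a,b}-Y_{a',b'})^2-\mathbb{E}(X_{a,b}-X_{a',b'})^2$ is affine in each of $\langle a,a'\rangle$ and $\langle b,b'\rangle$, hence minimized at a corner of the Cauchy--Schwarz box, where it equals $(pq-p'q')^2\ge 0$; and the decoupling step uses the symmetry of $T_1,T_2$ correctly, with $\mathrm{rad}(T_1)=\Vert S\Vert$ and $\mathbb{E}\Vert Sg\Vert\le(\mathbb{E}\Vert Sg\Vert^2)^{1/2}=\Vert S\Vert_F$ by Jensen. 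What your route buys is a self-contained verification of a tool the paper treats as standard; what the citation buys is brevity, which is the appropriate choice here since the lemma is only an ingredient in the expectation bounds of Theorems \ref{theorem-err3} and beyond. Your closing remark that the whole spectral estimate is Chevet's inequality is exactly right, and citing it directly would be the shortest complete justification.
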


\begin{lemma}\cite[Proposition 10.2]{halko2011finding} \label{Proposition 10.2}
	Let $\Gamma \in \mathbb{R}^{(k+p)\times k}$ be a standard Gaussian matrix. Then 
	\begin{align*}
		(\mathbb{E}\Vert \Gamma^{\dagger} \Vert_{F}^2)^{\frac{1}{2}}=\sqrt{\frac{k}{p-1}},\ \mathbb{E}\Vert \Gamma^{\dagger} \Vert \leq \frac{{\rm e}\sqrt{k+p}}{p}.
	\end{align*}
\end{lemma}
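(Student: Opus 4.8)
The statement splits into an exact Frobenius identity and a spectral inequality, and I would handle the two by entirely different tools: the former reduces to a Wishart trace computation, while the latter needs a tail estimate for the smallest singular value.

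For the Frobenius part, note that since $\Gamma\in\mathbb{R}^{(k+p)\times k}$ has more rows than columns it has full column rank $k$ almost surely, so $\Gamma^{\dagger}=(\Gamma^T\Gamma)^{-1}\Gamma^T$ and
\[
\Vert\Gamma^{\dagger}\Vert_F^2=\operatorname{trace}\bigl(\Gamma^{\dagger}(\Gamma^{\dagger})^T\bigr)=\operatorname{trace}\bigl((\Gamma^T\Gamma)^{-1}\bigr).
\]
The Gram matrix $W:=\Gamma^T\Gamma$ is a $k\times k$ Wishart matrix with $k+p$ degrees of freedom and identity scale, so $W^{-1}$ is inverse-Wishart. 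I would evaluate the expected trace entrywise: each diagonal entry satisfies $1/(W^{-1})_{ii}\sim\chi^2_{p+1}$, and since $\mathbb{E}[1/\chi^2_{\nu}]=1/(\nu-2)$ one gets $\mathbb{E}(W^{-1})_{ii}=1/(p-1)$ whenever $p>1$. Summing the $k$ diagonal entries yields $\mathbb{E}\operatorname{trace}(W^{-1})=k/(p-1)$, whence $(\mathbb{E}\Vert\Gamma^{\dagger}\Vert_F^2)^{1/2}=\sqrt{k/(p-1)}$, which is the claimed equality.

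For the spectral part, observe that $\Vert\Gamma^{\dagger}\Vert=1/\sigma_{\min}(\Gamma)$, the reciprocal of the smallest (the $k$th) singular value of the tall Gaussian matrix, so the task reduces to bounding $\mathbb{E}[\sigma_{\min}(\Gamma)^{-1}]$. The plan is to write
\[
\mathbb{E}\bigl[\sigma_{\min}(\Gamma)^{-1}\bigr]=\int_0^{\infty}\mathbb{P}\{\sigma_{\min}(\Gamma)^{-1}>t\}\,dt
\]
and to insert a sharp lower-tail bound for $\sigma_{\min}(\Gamma)$ coming from the explicitly normalized smallest-singular-value density of the real Gaussian (Laguerre) ensemble, of the form $\mathbb{P}\{\sigma_{\min}(\Gamma)\le s\}\le C\,(s\sqrt{k+p})^{p+1}$ for small $s$. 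Carrying out the resulting integral and optimizing the cutoff that separates the small-$s$ and large-$s$ regimes then produces the constant $\mathrm{e}\sqrt{k+p}/p$.

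The main obstacle is precisely this spectral estimate. Whereas the Frobenius identity collapses to a one-line Wishart trace formula, pinning down the exact constant $\mathrm{e}/p$ (rather than merely an $O(1/p)$ bound) requires the correctly normalized density of $\sigma_{\min}$ for the Gaussian ensemble together with a careful integration. Since this is exactly the content of \cite[Proposition 10.2]{halko2011finding}, I would in practice cite that derivation for the spectral inequality and reproduce only the short Wishart argument for the Frobenius identity in full.
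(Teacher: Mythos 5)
The paper offers no proof of this lemma at all---it is imported verbatim, with citation, from Halko--Martinsson--Tropp---and your proposal correctly reconstructs that source's own argument: the Frobenius identity via $\Vert\Gamma^{\dagger}\Vert_F^2=\operatorname{trace}\bigl((\Gamma^T\Gamma)^{-1}\bigr)$ together with the inverse-Wishart facts $1/(W^{-1})_{ii}\sim\chi^2_{p+1}$ and $\mathbb{E}\operatorname{trace}(W^{-1})=k/(p-1)$, and the spectral bound via integrating the smallest-singular-value tail estimate, which you, like the paper, ultimately defer to \cite[Proposition 10.2]{halko2011finding} itself. The only point worth flagging is the implicit hypothesis $p\ge 2$ (the inverse moment $\mathbb{E}[1/\chi^2_{p+1}]=1/(p-1)$ requires $p>1$, and the source also assumes $k\ge 2$), which the paper's statement of the lemma suppresses.
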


\begin{theorem} \label{theorem-err3}
With the same setting as Theorem \ref{theorem-err1} and Assumption \ref{assum},
 let 
 \begin{align*}
     \eta =\Vert Y \Vert \sqrt{1+(n-k-p)4^{k+p-1}} 
 \end{align*}
 with $Y$ being defined in (\ref{gsvd}).
 Then 
	\begin{align*}
		&\mathbb{E}	\Vert A-C_AM_AR_A \Vert \leq \eta  \left[ \Vert \Sigma_{11}\Sigma_{21}^{-1}\Vert \left( \frac{{\rm e}\sqrt{k+p}}{p}\Vert \Sigma_{12} \Vert_F + \sqrt{\frac{k}{p-1}}\Vert \Sigma_{12}\Vert \right) + \Vert\Sigma_{12}\Vert \right], \\
		&\mathbb{E}	\Vert B-C_BM_BR_B \Vert \leq  \eta \left[ \left( \frac{{\rm e}\sqrt{k+p}}{p}\Vert \Sigma_{12} \Vert_F + \sqrt{\frac{k}{p-1}}\Vert \Sigma_{12}\Vert \right)+ \Vert\Sigma_{22}\Vert \right],\\
		&\mathbb{E}	\Vert A-C_AM_AR_A \Vert_F \leq  \eta  \left[ \sqrt{1+\frac{k}{p-1}\Vert \Sigma_{11}\Sigma_{21}^{-1} \Vert}\Vert \Sigma_{12} \Vert_F  \right], \\
		&\mathbb{E}	\Vert B-C_BM_BR_B \Vert_F \leq  \eta \left[ \frac{k}{p-1}\Vert \Sigma_{12} \Vert_F^2+\Vert \Sigma_{22} \Vert_F^2 \right]^{1/2},
	\end{align*}
 where $\Sigma_{11}, \Sigma_{12}, \Sigma_{21}$, and $ \Sigma_{22}$ are defined as in  (\ref{apen6}).
\end{theorem}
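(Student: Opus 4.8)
The plan is to combine the deterministic GSVD-based bounds already obtained in (\ref{apen5}), (\ref{apen222}) and (\ref{apen322}) with an expectation computation over the Gaussian matrix $\Omega$, after which the only genuinely random quantity left to control is $\|\Gamma_1^\dagger\Gamma_2\Sigma_{12}\|$. First I would dispose of the deterministic prefactor: since $Y^T=QR$ is a QR factorization, $\|R\|=\|Y^T\|=\|Y\|$, so Lemma 3.2 of \cite{dong2021simpler} gives $\|R(I_n-P_{\Pi_C,N_1})\|\le\|R\|\,\|I_n-P_{\Pi_C,N_1}\|\le\|Y\|\sqrt{1+(n-k-p)4^{k+p-1}}=\eta$, and this bound holds almost surely. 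Feeding this into (\ref{apen5}) yields $\|A-C_AM_AR_A\|\le\eta\,\|A(I_n-X^\dagger X)R^{-1}\|$ pointwise (and the analogous inequality for $B$), so $\eta$ may be pulled out of every expectation below.

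The conceptual heart of the argument, and the step I expect to be the main obstacle, is establishing that $\Gamma_1=\Omega\hat V_1$ and $\Gamma_2=\Omega\hat U_2$ are \emph{independent} standard Gaussian matrices. This is forced by the GSVD block structure: because $U$ and $\tilde V$ are orthogonal and the zero-padding gives $\hat V_1=[0;V_1]$ and $\hat U_2=[U_2;0]$, one checks $\hat V_1^T\hat V_1=I_k$, $\hat U_2^T\hat U_2=I_{m-k}$ and $\hat V_1^T\hat U_2=0$, so the concatenation $[\hat V_1,\hat U_2]$ has orthonormal columns. Rotational invariance of the Gaussian distribution then makes $[\Gamma_1,\Gamma_2]=\Omega[\hat V_1,\hat U_2]$ a standard Gaussian matrix, whence $\Gamma_1$ and $\Gamma_2$ are independent standard Gaussians.

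With independence secured, I would condition on $\Gamma_1$ and apply the spectral part of Lemma \ref{Proposition 10.1} with $S=\Gamma_1^\dagger$, $N=\Sigma_{12}$ and $\Gamma_2$ playing the role of the Gaussian, obtaining $\mathbb{E}[\,\|\Gamma_1^\dagger\Gamma_2\Sigma_{12}\|\mid\Gamma_1\,]\le\|\Gamma_1^\dagger\|\,\|\Sigma_{12}\|_F+\|\Gamma_1^\dagger\|_F\,\|\Sigma_{12}\|$. Taking expectation over $\Gamma_1$ and invoking Lemma \ref{Proposition 10.2} (with Jensen to pass from $\mathbb{E}\|\Gamma_1^\dagger\|_F$ to $(\mathbb{E}\|\Gamma_1^\dagger\|_F^2)^{1/2}=\sqrt{k/(p-1)}$, and $\mathbb{E}\|\Gamma_1^\dagger\|\le e\sqrt{k+p}/p$) gives $\mathbb{E}\|\Gamma_1^\dagger\Gamma_2\Sigma_{12}\|\le\frac{e\sqrt{k+p}}{p}\|\Sigma_{12}\|_F+\sqrt{\frac{k}{p-1}}\|\Sigma_{12}\|$. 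Substituting this into (\ref{apen222}) and (\ref{apen322}) and multiplying through by $\eta$ delivers the two spectral-norm bounds.

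For the Frobenius bounds I would start from the Frobenius versions of (\ref{apinfang}) and (\ref{bpinfang}), using the matrix identity behind (\ref{apen7}) to identify $\|A(I_n-X^\dagger X)R^{-1}\|_F=\|\hat A(I_n-\hat X^\dagger\hat X)\|_F$. Conditioning on $\Gamma_1$ and applying the Frobenius identity $\mathbb{E}\|S\Gamma N\|_F^2=\|S\|_F^2\|N\|_F^2$ of Lemma \ref{Proposition 10.1} (with $S=\Sigma_{11}\Sigma_{21}^{-1}\Gamma_1^\dagger$, resp. $S=\Gamma_1^\dagger$) followed by $\|\Sigma_{11}\Sigma_{21}^{-1}\Gamma_1^\dagger\|_F\le\|\Sigma_{11}\Sigma_{21}^{-1}\|\,\|\Gamma_1^\dagger\|_F$ and Lemma \ref{Proposition 10.2}, I obtain $\mathbb{E}\|\hat A(I_n-\hat X^\dagger\hat X)\|_F^2\le\big(1+\frac{k}{p-1}\|\Sigma_{11}\Sigma_{21}^{-1}\|^2\big)\|\Sigma_{12}\|_F^2$ and $\mathbb{E}\|\hat B(I_n-\hat X^\dagger\hat X)\|_F^2\le\frac{k}{p-1}\|\Sigma_{12}\|_F^2+\|\Sigma_{22}\|_F^2$. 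Finally, bounding $\mathbb{E}\|A-C_AM_AR_A\|_F\le\eta\,\mathbb{E}\|\hat A(I_n-\hat X^\dagger\hat X)\|_F\le\eta\,(\mathbb{E}\|\hat A(I_n-\hat X^\dagger\hat X)\|_F^2)^{1/2}$ by Jensen, and likewise for $B$, completes the two Frobenius estimates.
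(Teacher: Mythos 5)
Your proposal is correct and follows essentially the same route as the paper's proof: bound $\Vert R\Vert=\Vert Y\Vert$ and $\Vert I_n-P_{\Pi_C,N_1}\Vert$ deterministically to extract $\eta$, then condition on $\Gamma_1$ and apply Lemmas \ref{Proposition 10.1} and \ref{Proposition 10.2} to $\Vert \Gamma_1^{\dagger}\Gamma_2\Sigma_{12}\Vert$ and $\Vert \Gamma_1^{\dagger}\Gamma_2\Sigma_{12}\Vert_F^2$, finishing the Frobenius cases with Jensen's inequality exactly as in (\ref{apen17})--(\ref{apen18}). Two minor remarks: your explicit check that $[\hat{V_1},\hat{U_2}]$ has orthonormal columns (hence $\Gamma_1,\Gamma_2$ are independent standard Gaussians) supplies a justification the paper only asserts; and your Frobenius bound for $A$ carries $\Vert \Sigma_{11}\Sigma_{21}^{-1}\Vert^2$ inside the square root, which agrees with the paper's own intermediate steps (\ref{apen17}) and (\ref{apen14}) and indicates that the unsquared norm appearing in the theorem statement is a typographical slip rather than a defect in your argument.
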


\begin{proof}
	Note that $Y^T=QR$, then $\Vert Y^T \Vert=\Vert Y \Vert=\Vert R \Vert$. Thus, noting  (\ref{apen5}), 
 we have 
	\begin{align}
		&\Vert A-C_AM_AR_A \Vert \leq \Vert Y \Vert \Vert I_n-P_{\Pi_C,N_1} \Vert \Vert A(I_n-X^\dagger X)R^{-1} \Vert, \label{apen8} \\
		&\Vert A-C_AM_AR_A \Vert_F \leq \Vert Y \Vert \Vert I_n-P_{\Pi_C,N_1} \Vert \Vert A(I_n-X^\dagger X)R^{-1}\Vert_F. \label{apen9}
	\end{align} 
	Similarly, for the case of $B$, 
	\begin{align}
		&\Vert B-C_BM_BR_B \Vert \leq \Vert Y \Vert \Vert I_n-P_{\Pi_C,N_1} \Vert \Vert B(I_n-X^\dagger X)R^{-1} \Vert, \label{apen10} \\
		&\Vert B-C_BM_BR_B \Vert_F \leq \Vert Y \Vert \Vert I_n-P_{\Pi_C,N_1} \Vert \Vert B(I_n-X^\dagger X)R^{-1} \Vert_F. \label{apen11}
	\end{align}
	By virtue of Lemma 3.2 in \cite{dong2021simpler}, we get
	\begin{align}
		\Vert I_n-P_{\Pi_C,N_1} \Vert \leq \sqrt{1+(n-k-p)4^{k+p-1}}. \label{apen12}
	\end{align}
	Next, it suffices to bound the expectations of $\Vert A(I-X^\dagger X)R^{-1} \Vert, \Vert B(I-X^\dagger X)R^{-1} \Vert$, $\Vert A(I-X^\dagger X)R^{-1} \Vert_F$, and $ \Vert B(I-X^\dagger X)R^{-1} \Vert_F$. 
	Observing (\ref{apen222}) and (\ref{apen322}), we need to take expectation on $\Vert  \Gamma_1^{\dagger}\Gamma_2\Sigma_{12}\Vert$ and $\Vert  \Gamma_1^{\dagger}\Gamma_2\Sigma_{12}\Vert_F^2$. On one hand,
	\begin{align}
		\mathbb{E}\Vert  \Gamma_1^{\dagger}\Gamma_2\Sigma_{12}\Vert &= \mathbb{E}\left( \mathbb{E}\left[\Vert  \Gamma_1^{\dagger}\Gamma_2\Sigma_{12}\Vert \Big\vert \Gamma_1 \right] \right) \leq \mathbb{E} \left( \Vert  \Gamma_1^{\dagger}\Vert \Vert\Sigma_{12}\Vert_F+\Vert  \Gamma_1^{\dagger}\Vert_F \Vert\Sigma_{12}\Vert \right) \notag \\
		&\leq \Vert\Sigma_{12}\Vert_F \mathbb{E}\Vert  \Gamma_1^{\dagger}\Vert +\Vert\Sigma_{12}\Vert \left(\mathbb{E}\Vert  \Gamma_1^{\dagger}\Vert_F^2 \right)^{1/2}  \notag \\
		&\leq  \frac{{\rm e}\sqrt{k+p}}{p}\Vert \Sigma_{12} \Vert_F + \sqrt{\frac{k}{p-1}}\Vert \Sigma_{12}\Vert, \label{apen13}
	\end{align}
	where the first equality relies on the fact that $\Gamma_1\in \mathbb{R}^{(k+p) \times k}$ and $ \Gamma_2$ are independent, the first inequality relies on Lemma \ref{Proposition 10.1}, the second inequality relies on H$\ddot{\rm o}$lder’s inequality, and the third inequality relies on Lemma \ref{Proposition 10.2}. On the other hand,
	\begin{align}
		\mathbb{E}\Vert  \Gamma_1^{\dagger}\Gamma_2\Sigma_{12}\Vert_F^2 &= \mathbb{E}\left( \mathbb{E}\left[\Vert  \Gamma_1^{\dagger}\Gamma_2\Sigma_{12}\Vert_F^2 \Big\vert  \Gamma_1 \right] \right) =\mathbb{E}\left( \Vert  \Gamma_1^{\dagger}\Vert_F^2 \Vert  \Sigma_{12}\Vert_F^2 \right) \notag \\
  &= \Vert  \Sigma_{12}\Vert_F^2 \mathbb{E}\Vert  \Gamma_1^{\dagger}\Vert_F^2 =\frac{k}{p-1}\Vert  \Sigma_{12}\Vert_F^2,  \label{apen14}
	\end{align}
	where the second equality relies on Lemma \ref{Proposition 10.1} and the last equality relies on Lemma \ref{Proposition 10.2}.
	Thus, taking the expectation of both sides of (\ref{apen222}) and (\ref{apen322}) implies 
	\begin{align}
		&\mathbb{E}\Vert A(I_n-X^\dagger X)R^{-1} \Vert \leq  \Vert \Sigma_{11}\Sigma_{21}^{-1} \Vert \cdot \mathbb{E}\Vert \Gamma_1^{\dagger}\Gamma_2\Sigma_{12} \Vert + \Vert \Sigma_{12} \Vert, \label{apen15} \\
		&\mathbb{E}\Vert B(I_n-X^\dagger X)R^{-1} \Vert \leq \mathbb{E}\Vert \Gamma_1^{\dagger}\Gamma_2\Sigma_{12} \Vert + \Vert \Sigma_{22} \Vert. \label{apen16}
	\end{align}
	Similarly, take the expectation of both sides of (\ref{apinfang}) and (\ref{bpinfang}) leading to
	\begin{align}
		\mathbb{E}\Vert A(I_n-X^\dagger X)R^{-1} \Vert_F &\leq \left( \mathbb{E}\Vert A(I_n-X^\dagger X)R^{-1} \Vert_F^2 \right)^{1/2} \notag \\
  &\leq \left( \Vert \Sigma_{11}\Sigma_{21}^{-1} \Vert^2 \cdot \mathbb{E}\Vert \Gamma_1^{\dagger}\Gamma_2\Sigma_{12} \Vert_F^2 +\Vert \Sigma_{12} \Vert_F^2 \right)^{1/2} , \label{apen17} \\
		 \mathbb{E}\Vert B(I_n-X^\dagger X)R^{-1} \Vert_F &\leq \left( \mathbb{E}\Vert B(I_n-X^\dagger X)R^{-1} \Vert_F^2 \right)^{1/2} \notag \\
   &\leq \left( \mathbb{E}\Vert \Gamma_1^{\dagger}\Gamma_2\Sigma_{12} \Vert_F^2 +\Vert \Sigma_{22} \Vert_F^2 \right)^{1/2}. \label{apen18}
	\end{align}
	Finally, taking the expectation of both sides of (\ref{apen8}), (\ref{apen9}), (\ref{apen10}), and (\ref{apen11}), and then combining them with (\ref{apen12}), (\ref{apen13}), (\ref{apen14}), (\ref{apen15}), (\ref{apen16}), (\ref{apen17}), and (\ref{apen18})  complete the proof.
\end{proof}

Similarly, combining Theorem \ref{theorem2} with Lemma \ref{lem-expectation}, we can obtain the error analysis for 
Algorithm \ref{alg3}.
\begin{corollary}\label{COR1}
	With the same setting  as Theorems \ref{theorem2} and \ref{theorem-err3}, we have  
	{\small\begin{align*}
		&\mathbb{E}	\Vert A-C_AM_AR_A \Vert \leq 2\eta  \left[ \Vert \Sigma_{11}\Sigma_{21}^{-1}\Vert \left( \frac{{\rm e}\sqrt{k+p}}{p}\Vert \Sigma_{12} \Vert_F + \sqrt{\frac{k}{p-1}}\Vert \Sigma_{12}\Vert \right) + \Vert\Sigma_{12}\Vert \right]  \\
		&+\sqrt{1+(m-k-p)4^{k+p-1}} \left[ \left( 1+\sqrt{\frac{k}{p-1}} \right) \sigma_{k+1}(A) +\frac{{\rm e}\sqrt{k+p}}{p}\left( \sum_{j>k}^{}\sigma_j^2(A) \right)^{1/2} \right], \\
		&\mathbb{E}	\Vert B-C_BM_BR_B \Vert \leq 2\eta  \left[ \left( \frac{{\rm e}\sqrt{k+p}}{p}\Vert \Sigma_{12} \Vert_F + \sqrt{\frac{k}{p-1}}\Vert \Sigma_{12}\Vert \right)+ \Vert\Sigma_{22}\Vert \right] + \\
		&\sqrt{1+(d-k-p)4^{k+p-1}} \left[ \left( 1+\sqrt{\frac{k}{p-1}} \right) \sigma_{k+1}(B) +\frac{{\rm e}\sqrt{k+p}}{p}\left( \sum_{j>k}^{}\sigma_j^2(B) \right)^{1/2} \right].
	\end{align*}}
\end{corollary}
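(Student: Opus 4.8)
The plan is to start from the spectral-norm estimates \eqref{single A} and \eqref{single B} of Theorem \ref{theorem2}, pass to expectations by linearity, and control the two resulting summands separately: the first by the GSVD analysis already carried out for Theorem \ref{theorem-err3}, and the second by Lemma \ref{lem-expectation} together with the projector estimate of \cite{dong2021simpler}. Since the factors depend on the two \emph{independent} Gaussian matrices $\Omega$ (through $X$) and $\Omega_1$ (through $Y_1$), each summand can be estimated using only the randomness relevant to it.

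For the first summand I would not use the crude split form displayed in \eqref{single A}, but instead return to its origin in the proof of Theorem \ref{theorem2}, namely $2\Vert A-C_AV^T\Vert$. Because $C_A$ and $V^T$ depend only on the column-selection step of Algorithm \ref{alg1}, which is shared with Algorithm \ref{alg2}, the estimate of $\mathbb{E}\Vert A-C_AV^T\Vert$ produced for Theorem \ref{theorem-err3} applies verbatim. Concretely, the QR-based manipulation of \eqref{apen5}--\eqref{apen8}, the projector estimate \eqref{apen12}, and the expectation bounds \eqref{apen13} and \eqref{apen15} give $\mathbb{E}\big(2\Vert A-C_AV^T\Vert\big)\le 2\eta\big[\Vert\Sigma_{11}\Sigma_{21}^{-1}\Vert(\tfrac{{\rm e}\sqrt{k+p}}{p}\Vert\Sigma_{12}\Vert_F+\sqrt{\tfrac{k}{p-1}}\Vert\Sigma_{12}\Vert)+\Vert\Sigma_{12}\Vert\big]$, which is exactly the first bracket of the claim. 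Nothing new has to be verified, since Assumption \ref{assum} is inherited from Theorem \ref{theorem-err3}.

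For the second summand $\Vert I_m-P_{Y_1,N_4}\Vert\,\Vert(I_m-Y_1Y_1^\dagger)A\Vert$, I would first bound $\Vert I_m-P_{Y_1,N_4}\Vert\le\sqrt{1+(m-k-p)4^{k+p-1}}$ by Lemma 3.2 of \cite{dong2021simpler}, exactly as recorded in the proof of Theorem \ref{theorem-err2}. The one genuinely new step is to recognise $\mathbb{E}\Vert(I_m-Y_1Y_1^\dagger)A\Vert$ as an instance of the randomized range-finder error for $A^T$: since $Y_1=A\Omega_1^T$ gives $(\Omega_1 A^T)^T=Y_1$, the orthogonal projector $Y_1Y_1^\dagger$ coincides with $(\Omega_1 A^T)^\dagger(\Omega_1 A^T)$, so using the symmetry of $I_m-Y_1Y_1^\dagger$, the transpose invariance of the spectral norm, and $\sigma_j(A^T)=\sigma_j(A)$, Lemma \ref{lem-expectation} applied to $A^T$ with sketch $\Omega_1 A^T$ yields $\mathbb{E}\Vert(I_m-Y_1Y_1^\dagger)A\Vert\le(1+\sqrt{\tfrac{k}{p-1}})\sigma_{k+1}(A)+\tfrac{{\rm e}\sqrt{k+p}}{p}(\sum_{j>k}\sigma_j^2(A))^{1/2}$. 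Multiplying the two factors produces the second bracket.

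Adding the two estimates proves the bound for $A$; the bound for $B$ follows line by line from \eqref{single B}, now using $Y_2=B\Omega_1^T$, the estimate $\Vert I_d-P_{Y_2,N_5}\Vert\le\sqrt{1+(d-k-p)4^{k+p-1}}$, the $B$-bracket of Theorem \ref{theorem-err3}, and Lemma \ref{lem-expectation} for $B^T$ with sketch $\Omega_1 B^T$. I expect the main obstacle to be precisely the identification $Y_1Y_1^\dagger=(\Omega_1 A^T)^\dagger(\Omega_1 A^T)$, which is what lets the column-sketch bound of Lemma \ref{lem-expectation} be repurposed for the row side; once this is in place, the remainder is simply a reassembly of estimates already established for Theorems \ref{theorem-err2} and \ref{theorem-err3}.
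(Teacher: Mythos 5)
Your proposal is correct and follows essentially the same route as the paper: the paper obtains Corollary \ref{COR1} by combining the split in Theorem \ref{theorem2} with the GSVD-based bound of Theorem \ref{theorem-err3} for the term $2\Vert A-C_AV^T\Vert$ (which is unchanged because the column selection is shared with Algorithm \ref{alg2}) and with Lemma \ref{lem-expectation} plus Lemma 3.2 of \cite{dong2021simpler} for the row-side term, exactly as you do. Your explicit identification $Y_1Y_1^{\dagger}=(\Omega_1A^T)^{\dagger}(\Omega_1A^T)$, which lets the column-sketch bound be applied to $A^T$, is a detail the paper leaves implicit (it is also tacitly used in Theorem \ref{theorem-err2}), and your write-up supplies it correctly.
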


\section{Randomized algorithms for CUR decomposition of matrix triplet}
\label{sec-triplets}
We first consider a randomized algorithm based on randomized CPQR. 
Let $A\in \mathbb{R}^{m \times n}$, $B\in \mathbb{R}^{m \times t}$, $G\in \mathbb{R}^{d \times n}$, ${\rm rank}(A) > k, {\rm rank}(B) > k$, and ${\rm rank}(G) > k$, 
and let $\Omega_2 \in \mathbb{R}^{l \times (m+d)}$ and $\Omega_3 \in \mathbb{R}^{l \times (n+t)}$ be Gaussian matrices. Define 
\begin{align*}
	X_1=\Omega_2\begin{bmatrix}
		A \\ G
	\end{bmatrix},\ Y_3=\begin{bmatrix}
	A, B
\end{bmatrix}\Omega_3^T.
\end{align*}
By virtue of the CPQRs of $X_1$ and $Y_3^T$, we have
 \begin{align*}
 	\underset{l\times n}{X_1}\underset{n\times n}{\Pi_3}=X_1\begin{bmatrix}
 		\underset{n\times l}{\Pi_{31}},  \underset{n\times (n-l)}{\Pi_{31}^c}
 	\end{bmatrix}=\underset{l\times l}{Q_3}\begin{bmatrix}
 	\underset{l\times l}{T_{31}}, \underset{l\times (n-l)}{T_{32}}
 \end{bmatrix}, \\
\underset{l\times m}{Y_3^T}\underset{m\times m}{\Pi_4}=Y_3^T\begin{bmatrix}
	\underset{m\times l}{\Pi_{41}},  \underset{m\times (m-l)}{\Pi_{41}^c}
\end{bmatrix}=\underset{l\times l}{Q_4}\begin{bmatrix}
	\underset{l\times l}{T_{41}}, \underset{l\times (m-l)}{T_{42}}
\end{bmatrix},
 \end{align*}
where $\Pi_3$ and $ \Pi_4$ are permutation matrices, $Q_3$ and $ Q_4$ are orthogonal, and $T_{31}$ and $ T_{41}$ are invertible and upper triangular. With this step, we can select the columns of $A$ and $G$, and the rows of $A$ and $B$. So, it suffices to consider how to select the rows of $G$ and the columns of $B$. To this end, 
similar to the discussion on the row IDs of $C_A$ and $C_B$ in Section \ref{sec-pairs}, we perform the exact IDs for the columns of $G$ and the rows of $B$ to obtain them. 
The specific algorithm 
is concluded in Algorithm \ref{alg4}.

\begin{algorithm}[ht] 
	\caption{Randomized algorithm for the CUR decomposition of $(A,B,G)$} 
	\label{alg4}
	\hspace*{0.02in} {\bf Input:} 
	$A\in \mathbb{R}^{m \times n}$, $B\in \mathbb{R}^{m \times t}$, $G\in \mathbb{R}^{d \times n}$, the target rank $k <n$, and the sample size $l=k+p$.\\
	\hspace*{0.02in} {\bf Output:} 
    $C_A$, $R_A$, $M_A$, 
    $C_B$, $R_B$, $M_B$, 
    $C_G$, $R_G$, and $M_G$.	
	\begin{algorithmic}[1]
		\State Perform Algorithm \ref{alg1} on the matrix pairs $(A,G)$ and $(A^T,B^T)$, respectively, to obtain
  $C_A=A(:,J)$, $C_G=G(:,J)$, $R_A=C_{A^T}^T=A(I,:)$, and $R_B=C_{B^T}^T=B(I,:)$ with $\lvert I \rvert=\lvert J \rvert=l$.
		\State Compute the exact CPQRs of $R_B$ and $C_G^T$, respectively, to obtain 
  $C_B=B(:,J_B)$ and $R_G=G(I_G,:)$ with $\lvert J_B\rvert=\lvert I_G \rvert=l$.			
		\State 	Compute $M_A=C_A^{\dagger}AR_A^{\dagger}$ ,  $M_B=C_B^{\dagger}BR_B^{\dagger}$ and $M_G=C_G^{\dagger}GR_G^{\dagger}$.
	\end{algorithmic}
\end{algorithm}

Noting that $X_1\Pi_{31}=Q_3T_{31}$ and $Y_3^T\Pi_{41}=Q_4T_{41}$ are invertible, we can define two oblique projectors: 
$P_{\Pi_{31},N_7}=\Pi_{31}(X_1\Pi_{31})^{-1}X_1$ and $P_{Y_3,N_8}=Y_3(\Pi_{41}^TY_3)^{-1}\Pi_{41}^T$.
Using them, the following results hold.
\begin{theorem} \label{theorem10}
	Let $C_A\in \mathbb{R}^{m \times l}, C_B\in \mathbb{R}^{m \times l}, C_G\in \mathbb{R}^{d \times l}, R_A\in \mathbb{R}^{l \times n}, R_B\in \mathbb{R}^{l \times t}, R_G\in \mathbb{R}^{l \times n}, M_A\in \mathbb{R}^{l \times l}, M_B\in \mathbb{R}^{l \times l}$ and $M_G\in \mathbb{R}^{l \times l}$ be obtained by Algorithm \ref{alg4}. Then
	\begin{align}
		\Vert A-C_AM_AR_A \Vert &\leq \Vert I_n-P_{\Pi_{31},N_7} \Vert \Vert A(I_n-X_1^{\dagger}X_1) \Vert\nonumber\\
  &+ \Vert I_m-P_{Y_3,N_8} \Vert \Vert (I_m-Y_3Y_3^{\dagger})A \Vert, \label{Aerror} \\ 
		\Vert B-C_BM_BR_B \Vert &\leq \Vert I_m-P_{Y_3,N_8} \Vert \Vert (I_m-Y_3Y_3^{\dagger})B \Vert, \label{Berror}\\
		 \Vert G-C_GM_GR_G \Vert &\leq \Vert I_n-P_{\Pi_{31},N_7} \Vert \Vert G(I_n-X_1^{\dagger}X_1) \Vert. \label{Gerror}
	\end{align}
\end{theorem}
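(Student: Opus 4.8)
The plan is to treat the three residuals separately, exploiting that for each matrix the CUR product is a two–sided orthogonal projection. Writing $P_{C_A}=C_AC_A^\dagger$ and $P_{R_A}=R_A^\dagger R_A$ (and analogously for $B,G$), the choice $M=C^\dagger(\cdot)R^\dagger$ always gives $CMR=P_C(\cdot)P_R$. The decisive distinction is structural: for $A$ both the column index $J$ (from the CPQR of $X_1$) and the row index $I$ (from the CPQR of $Y_3^T$) come from a randomized sketch, so $A$ is genuinely two–sided; for $G$ the rows $I_G$ come from an \emph{exact} CPQR of $C_G^T$, and for $B$ the columns $J_B$ come from an \emph{exact} CPQR of $R_B$, so in these two cases one of the selections is lossless and should contribute no error.

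For \eqref{Aerror} I would avoid the triangle–inequality split used in Theorem \ref{theorem2} (which costs a factor $2$) and instead use the tighter identity
\[
A-C_AM_AR_A=A-P_{C_A}AP_{R_A}=(I_m-P_{C_A})A+P_{C_A}A(I_n-P_{R_A}),
\]
so that $\Vert A-C_AM_AR_A\Vert\le\Vert(I_m-P_{C_A})A\Vert+\Vert A(I_n-P_{R_A})\Vert$ since $\Vert P_{C_A}\Vert=1$. The first term is the column–ID residual $\Vert A-C_AC_A^\dagger A\Vert$, which the argument of Theorem \ref{theorem1} (now with the sketch $X_1$ and the oblique projector $P_{\Pi_{31},N_7}$) bounds by $\Vert I_n-P_{\Pi_{31},N_7}\Vert\,\Vert A(I_n-X_1^\dagger X_1)\Vert$. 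The second term $\Vert A-AR_A^\dagger R_A\Vert$ is handled exactly as in \eqref{AWR1}: writing $AR_A^\dagger R_A=P_{AR_A^T,N}A$ with $P_{AR_A^T,N}=AR_A^T(R_AR_A^T)^{-1}\Pi_{41}^T$ and using $\Pi_{41}^TA=R_A$ one checks $P_{Y_3,N_8}P_{AR_A^T,N}=P_{Y_3,N_8}$, which converts the right–projection residual into the row form $\Vert I_m-P_{Y_3,N_8}\Vert\,\Vert(I_m-Y_3Y_3^\dagger)A\Vert$. Summing the two pieces gives \eqref{Aerror} with coefficient one on each term.

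For \eqref{Gerror} and \eqref{Berror} I would first collapse the CUR product. Running the exact row ID of $C_G$ exactly as for $C_A$ in Section \ref{sec-pairs} shows that the intersection submatrix $(C_G)_s=C_G(I_G,:)$ is invertible, that $M_G=C_G^\dagger GR_G^\dagger=(C_G)_s^\dagger$, and hence $C_GM_GR_G=C_GC_G^\dagger G=P_{C_G}G$; the row selection drops out, and Theorem \ref{theorem1} applied to $G$ yields \eqref{Gerror}. For $B$ the same derivation on the transpose—an exact \emph{column} ID of $R_B$ with invertible intersection $B_s=B(I,J_B)$—gives $M_B=B_s^\dagger$ and the collapse $C_BM_BR_B=BR_B^\dagger R_B$, so the column selection drops out and only $\Vert B-BR_B^\dagger R_B\Vert$ remains; this is then turned into \eqref{Berror} by the oblique–projector step of \eqref{AWR1}, using $\Pi_{41}^TB=R_B$ to verify $P_{Y_3,N_8}P_{BR_B^T,N}=P_{Y_3,N_8}$.

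The main obstacle is the collapse step for $B$ and $G$: one must establish, through the Moore–Penrose identities (Lemma \ref{fanxu}) and the invertibility of the intersection submatrices, that an exact inner CPQR forces the middle factor to equal the pseudoinverse of that submatrix and the full product to degenerate to a one–sided projection, with the transposition for $B$ carried out carefully. The secondary technical point, reused three times, is the oblique–projector cancellation $P_{Y_3,N_8}P_{\,\cdot\,,N}=P_{Y_3,N_8}$, whose validity rests entirely on the identity $\Pi_{41}^T[A,B]=[R_A,R_B]$ linking the selected rows to the pivot columns of $Y_3^T$.
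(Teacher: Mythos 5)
Your proposal matches the paper's proof essentially step for step: for $A$ you use the same one-factor split $(I_m-C_AC_A^{\dagger})A+C_AC_A^{\dagger}A(I_n-R_A^{\dagger}R_A)$ (which is exactly how the paper avoids the factor $2$ of Theorem \ref{theorem2}), and for $B$ and $G$ you collapse the CUR product to a one-sided projection via the exact inner CPQR-based IDs, just as the paper does, before applying the $P_{\Pi_{31},N_7}$ and $P_{Y_3,N_8}$ oblique-projector bounds. The only difference is presentational: you spell out the collapse $C_GM_GR_G=C_GC_G^{\dagger}G$ and $C_BM_BR_B=BR_B^{\dagger}R_B$ through the invertible intersection submatrices, whereas the paper asserts it in one line by citing the exactness of the IDs from Section \ref{sec-pairs}.
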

\begin{proof}
	The proof is similar to those of Theorems \ref{theorem1} and  \ref{theorem2}. We first consider the case for the matrix $A$. 
Since $X_1P_{\Pi_{31},N_7}=X_1$ and $P_{Y_3,N_8}Y_3=Y_3$, we have 
\begin{align*}
		\Vert A-C_AC_A^{\dagger}A \Vert &\leq \Vert I_n-P_{\Pi_{31},N_7} \Vert \Vert A(I_n-X_1^{\dagger}X_1) \Vert,\\
  \Vert A-AR_A^{\dagger}R_A \Vert &\leq \Vert I_m-P_{Y_3,N_8} \Vert \Vert (I_m-Y_3Y_3^{\dagger})A \Vert. 	
	\end{align*}
 Considering the fact that $C_AC_A^{\dagger}$ is an orthogonal projector, we get
\begin{align*}
	\Vert A-C_AM_AR_A \Vert &= \Vert A-C_AC_A^{\dagger}AR_A^{\dagger}R_A \Vert \\
	&= \Vert (I_m-C_AC_A^{\dagger})A+C_AC_A^{\dagger}A(I_n-R_A^{\dagger}R_A) \Vert \\
	& \leq \Vert (I_m-C_AC_A^{\dagger})A \Vert+ \Vert C_AC_A^{\dagger}\Vert \Vert A(I_n-R_A^{\dagger}R_A) \Vert \\
	&= \Vert (I_m-C_AC_A^{\dagger})A \Vert+ \Vert A(I_n-R_A^{\dagger}R_A) \Vert.
\end{align*}
Thus, (\ref{Aerror}) can be obtained. Similarly, for the cases of the matrices $B$ and $G$, the following results hold, 
\begin{align*}
		\Vert G-C_GC_G^{\dagger}G \Vert &\leq \Vert I_n-P_{\Pi_{31},N_7} \Vert \Vert G(I_n-X_1^{\dagger}X_1) \Vert,\\  \Vert B-BR_B^{\dagger}R_B \Vert &\leq \Vert I_m-P_{Y_3,N_8} \Vert \Vert (I_m-Y_3Y_3^{\dagger})B \Vert. 
	\end{align*}
Note that the CPQRs of $C_G^T$ and $R_B$ can imply the exact row IDs. Then 
\begin{align*}
	\Vert B-C_BM_BR_B \Vert = \Vert B-BR_B^{\dagger}R_B \Vert, \Vert G-C_GM_GR_G \Vert = \Vert G-C_GC_G^{\dagger}G \Vert.
\end{align*}
As a result, (\ref{Berror}) and (\ref{Gerror}) are derived.
 \end{proof}

Now we present the pass-efficient algorithm, i.e., Algorithm \ref{alg5}, for the CUR decomposition of matrix triplet, whose deduction is  like that of matrix pair. 

\begin{algorithm}[ht] 
	\caption{Pass-efficient algorithm for the CUR decomposition of $(A,B,G)$} 
	\label{alg5}
	\hspace*{0.02in} {\bf Input:} 
	$A\in \mathbb{R}^{m \times n}$, $B\in \mathbb{R}^{m \times t}$, $G\in \mathbb{R}^{d \times n}$, the target rank $k <n$, and the sample size $l=k+p$. \\
	\hspace*{0.02in} {\bf Output:} 
    $C_A$, $R_A$, $M_A$, 
    $C_B$, $R_B$, $M_B$, 
    $C_G$, $R_G$, and $M_G$.
	\begin{algorithmic}[1]
		\State Draw four Gaussian matrices $\Omega_2\in \mathbb{R}^{l \times (m+d)}$, $\Omega_3\in \mathbb{R}^{l \times (n+t)}$, $\Omega_4\in \mathbb{R}^{l \times m}$ and $\Omega_5\in \mathbb{R}^{l \times n}$.
		\State In a single pass, compute
  $X_1=\Omega_2 \begin{bmatrix}
				A \\ G
			\end{bmatrix}$, $Y_3=\begin{bmatrix}
			A, B
		\end{bmatrix}\Omega_3^T$, $X_2=\Omega_4 B$, and $Y_4=G\Omega_5^T$. 		
		\State 	Compute the exact CPQR of $X_1$ to obtain $C_A=A(:,J)$ and $C_G=G(:,J)$ with $\lvert J\rvert =l$.
		\State 	Compute the exact CPQR of $Y_3^T$ to obtain $R_A=A(I,:)$ and $R_B=B(I,:)$ with $\lvert I\rvert =l$.
		\State 	Compute the exact CPQRs of $X_2$ and $Y_4^T$, respectively,  to obtain $C_B=A(:,J_B)$ and $R_G=G(I_G,:)$ with $\lvert J_B\rvert =\lvert I_G\rvert =l$	.	
        \State 	Compute $M_A=C_A^{\dagger}AR_A^{\dagger}$, $M_B=C_B^{\dagger}BR_B^{\dagger}$, and $M_G=C_G^{\dagger}GR_G^{\dagger}$.
	\end{algorithmic}
\end{algorithm}

Next, we present the expectation error bounds  for Algorithm \ref{alg4}.  For brevity, we only present the results of spectral norm.
\begin{theorem}
With the same setting as Theorem \ref{theorem10}, let $k$ be the target rank and $p$ be the oversampling parameter, and let
	\begin{align*}
		\beta &= \left( 1+\sqrt{\frac{k}{p-1}} \right) \sigma_{k+1}\begin{bmatrix}
			A \\ G
		\end{bmatrix} +\frac{{\rm e}\sqrt{k+p}}{p}\left( \sum_{j>k}^{}\sigma_j^2\begin{bmatrix}
			A \\ G
		\end{bmatrix} \right)^{\frac{1}{2}}, \\
	\theta &= \left( 1+\sqrt{\frac{k}{p-1}} \right) \sigma_{k+1}\begin{bmatrix}
		A , B
	\end{bmatrix} +\frac{{\rm e}\sqrt{k+p}}{p}\left( \sum_{j>k}^{}\sigma_j^2\begin{bmatrix}
		A , B
	\end{bmatrix} \right)^{\frac{1}{2}}.
	\end{align*}
	Then 
	\begin{align*}
		\mathbb{E}\Vert A-C_AM_AR_A \Vert &\leq \sqrt{1+(n-k-p)4^{k+p-1}}\times \beta \\ &+ \sqrt{1+(m-k-p)4^{k+p-1}}\times \theta, \\
		\mathbb{E}\Vert B-C_BM_BR_B \Vert &\leq \sqrt{1+(m-k-p)4^{k+p-1}}\times \theta, \\
		\mathbb{E}\Vert G-C_GM_GR_G \Vert &\leq 
		\sqrt{1+(n-k-p)4^{k+p-1}}\times \beta.
	\end{align*}
\end{theorem}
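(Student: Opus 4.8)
The plan is to combine the deterministic bounds of Theorem \ref{theorem10} with the oblique-projector estimate of Lemma 3.2 in \cite{dong2021simpler} and the expectation bound of Lemma \ref{lem-expectation}, mirroring exactly the arguments behind Theorems \ref{theorem-err1} and \ref{theorem-err2}. First I would record the two deterministic projector bounds. Since $X_1=\Omega_2\begin{bmatrix} A \\ G\end{bmatrix}\in\mathbb{R}^{l\times n}$ plays the same role for the column selection of $(A,G)$ as $X$ did for $(A,B)$, and $Y_3^T=\Omega_3\begin{bmatrix} A^T \\ B^T\end{bmatrix}\in\mathbb{R}^{l\times m}$ plays the same role for row selection as $Y_1^T$ did, Lemma 3.2 in \cite{dong2021simpler} gives
\begin{align*}
\Vert I_n-P_{\Pi_{31},N_7}\Vert \leq \sqrt{1+(n-k-p)4^{k+p-1}}, \quad \Vert I_m-P_{Y_3,N_8}\Vert \leq \sqrt{1+(m-k-p)4^{k+p-1}}.
\end{align*}
These hold almost surely, so they may be pulled out of the expectation as deterministic constants.

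Next I would reduce the residual factors in each line of Theorem \ref{theorem10} to quantities governed by Lemma \ref{lem-expectation}. For the $X_1$-factors, since $A$ and $G$ are row blocks of $\begin{bmatrix} A \\ G\end{bmatrix}$, I have
\begin{align*}
\max\left\{\Vert A(I_n-X_1^\dagger X_1)\Vert,\ \Vert G(I_n-X_1^\dagger X_1)\Vert\right\} \leq \left\Vert \begin{bmatrix} A \\ G\end{bmatrix}(I_n-X_1^\dagger X_1)\right\Vert,
\end{align*}
and applying Lemma \ref{lem-expectation} to the matrix $\begin{bmatrix} A \\ G\end{bmatrix}$ with its column sketch $X_1$ bounds the expectation of the right-hand side by $\beta$. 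For the $Y_3$-factors, the decisive step is a transposition: because $A$ and $B$ are column blocks of $\begin{bmatrix} A , B\end{bmatrix}$ and $I_m-Y_3Y_3^\dagger$ is a symmetric orthogonal projector, I get
\begin{align*}
\max\left\{\Vert (I_m-Y_3Y_3^\dagger)A\Vert,\ \Vert (I_m-Y_3Y_3^\dagger)B\Vert\right\} \leq \left\Vert \begin{bmatrix} A^T \\ B^T\end{bmatrix}(I_m-(Y_3^T)^\dagger Y_3^T)\right\Vert,
\end{align*}
where I used $Y_3Y_3^\dagger=(Y_3^T)^\dagger Y_3^T$, as both are the orthogonal projector onto ${\rm range}(Y_3)$. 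Since $Y_3^T=\Omega_3\begin{bmatrix} A^T \\ B^T\end{bmatrix}$ is precisely a Gaussian column sketch of $\begin{bmatrix} A^T \\ B^T\end{bmatrix}$, Lemma \ref{lem-expectation} applies, and using $\sigma_j\begin{bmatrix} A^T \\ B^T\end{bmatrix}=\sigma_j\begin{bmatrix} A , B\end{bmatrix}$ bounds the expectation by $\theta$.

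Finally I would assemble the three estimates. Taking expectations in the almost-sure inequalities of Theorem \ref{theorem10}, factoring out the deterministic projector-norm constants, and substituting $\mathbb{E}\Vert \begin{bmatrix} A \\ G\end{bmatrix}(I_n-X_1^\dagger X_1)\Vert\leq\beta$ together with $\mathbb{E}\Vert \begin{bmatrix} A^T \\ B^T\end{bmatrix}(I_m-(Y_3^T)^\dagger Y_3^T)\Vert\leq\theta$ yields the claimed bounds for $A$, $B$, and $G$ directly. I expect the only genuinely delicate point to be the bookkeeping of sketch directions for $Y_3$ --- namely verifying that $Y_3$ is a row sketch of $\begin{bmatrix} A , B\end{bmatrix}$ whose transpose is a Gaussian column sketch, so that Lemma \ref{lem-expectation} is applicable and the singular values reorganize into those of $\begin{bmatrix} A , B\end{bmatrix}$. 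Everything else is a routine repetition of the matrix-pair arguments.
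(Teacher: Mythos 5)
Your proposal is correct and follows essentially the same route as the paper's proof: deterministic projector bounds from Lemma 3.2 of \cite{dong2021simpler}, the block-submatrix norm inequalities, and Lemma \ref{lem-expectation} applied to $\begin{bmatrix} A \\ G\end{bmatrix}$ with sketch $X_1$ and to $\begin{bmatrix} A^T \\ B^T\end{bmatrix}$ with sketch $Y_3^T$, then taking expectations in Theorem \ref{theorem10}. The only difference is cosmetic: you spell out the transposition bookkeeping ($Y_3Y_3^{\dagger}=(Y_3^T)^{\dagger}Y_3^T$ and $\sigma_j\begin{bmatrix} A^T \\ B^T\end{bmatrix}=\sigma_j\begin{bmatrix} A , B\end{bmatrix}$) that the paper leaves implicit.
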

\begin{proof}
	By virtue of  Lemma 3.2 in \cite{dong2021simpler}, we have 
	\begin{align*}
		\Vert I_n-P_{\Pi_{31},N_7} \Vert \leq \sqrt{1+(n-k-p)4^{k+p-1}}, \Vert I_m-P_{Y_3,N_8} \Vert  \leq \sqrt{1+(m-k-p)4^{k+p-1}}.
	\end{align*}
	Thus, combining the fact 
	\begin{align*}
		&{\rm max}\{\Vert A(I_n-X_1^{\dagger}X_1) \Vert, \Vert G(I_n-X_1^{\dagger}X_1) \Vert\} \leq  \left \Vert \begin{bmatrix}
			A \\G
		\end{bmatrix}(I_n-X_1^{\dagger}X_1) \right \Vert, \\
	&{\rm max}\{\Vert (I_m-Y_3Y_3^{\dagger})A \Vert, \Vert (I_m-Y_3Y_3^{\dagger})B\Vert\} \leq \Vert (I_m-Y_3Y_3^{\dagger}) \begin{bmatrix}
		A, B
	\end{bmatrix} \Vert
	\end{align*}
with Theorem \ref{theorem10} and Lemma \ref{lem-expectation} implies the desired results.
\end{proof}

Similar to the error analysis of the CUR decomposition for matrix pair, i.e., Theorem \ref{theorem-err3}, we now consider 
the alternative  expectation error bounds of 
Algorithm \ref{alg4}.
Let the GSVDs of $(A,G)$ and $(A^T,B^T)$ be
\begin{align*}
	\begin{bmatrix}
		A \\ G
	\end{bmatrix}= \begin{bmatrix}
		W_1\Sigma_3H_1^T \\ W_2\Sigma_4H_1^T
	\end{bmatrix},\ \begin{bmatrix}
		A^T \\ B^T
	\end{bmatrix}= \begin{bmatrix}
		W_3\Sigma_5H_2^T \\ W_4\Sigma_6H_2^T
	\end{bmatrix},   
\end{align*}
where $H_1\in \mathbb{R}^{n\times n}$ and $ H_2\in \mathbb{R}^{m\times m}$ are nonsingular, $W_1\in \mathbb{R}^{m\times m}, W_2\in \mathbb{R}^{d\times d}, W_3\in \mathbb{R}^{n\times n},$ and $ W_4\in \mathbb{R}^{t\times t}$ are orthogonal, $\Sigma_3\in \mathbb{R}^{m\times n}, \Sigma_4\in \mathbb{R}^{d\times n}, \Sigma_5\in \mathbb{R}^{n\times m},$ and $ \Sigma_6\in \mathbb{R}^{t\times m}$ are diagonal. 
By using the similar partition method of matrix pair, i.e., 
$W_1=\begin{bmatrix}
    W_{11} , W_{12}
\end{bmatrix}$,  $W_2=\begin{bmatrix}
    W_{21} , W_{22}
\end{bmatrix}$, $W_3=\begin{bmatrix}
    W_{31} , W_{32}
\end{bmatrix}$, $W_4=\begin{bmatrix}
    W_{41} , W_{42}
\end{bmatrix}$,  
\begin{align*}
    \Sigma_3 =\begin{bmatrix}
		\Sigma_{31} & \\ & \Sigma_{32} 
	\end{bmatrix}, \Sigma_4 =\begin{bmatrix}
		\Sigma_{41} & \\ & \Sigma_{42} 
	\end{bmatrix}, \Sigma_5 =\begin{bmatrix}
		\Sigma_{51} & \\ & \Sigma_{52} 
	\end{bmatrix},\textrm{ and } \Sigma_6 =\begin{bmatrix}
		\Sigma_{61} & \\ & \Sigma_{62} 
	\end{bmatrix} 
\end{align*}
with $\Sigma_{31}, \Sigma_{41}, \Sigma_{51}$ and $\Sigma_{61}\in \mathbb{R}^{k\times k}$, 
we have 
\begin{align*}
    \Omega_2 \begin{bmatrix}
		A \\ G
	\end{bmatrix} &= \Omega_2 \begin{bmatrix}
		\hat{W}_{11}\Sigma_{31}+\hat{W}_{21}\Sigma_{41}, \hat{W}_{12}\Sigma_{32}+\hat{W}_{22}\Sigma_{42}
	\end{bmatrix}H_1^T \\ 
 & \approx \begin{bmatrix}
		\Omega_2\hat{W}_{21}\Sigma_{41}, \Omega_2\hat{W}_{12}\Sigma_{32}
	\end{bmatrix}H_1^T, \\
 \Omega_3 \begin{bmatrix}
		A^T \\ B^T
	\end{bmatrix} &= \Omega_3 \begin{bmatrix}
		\hat{W}_{31}\Sigma_{51}+\hat{W}_{41}\Sigma_{61}, \hat{W}_{32}\Sigma_{52}+\hat{W}_{42}\Sigma_{62}
	\end{bmatrix}H_2^T \\
 & \approx\begin{bmatrix}
		\Omega_3\hat{W}_{41}\Sigma_{61}, \Omega_3\hat{W}_{32}\Sigma_{52}
	\end{bmatrix}H_2^T,
\end{align*}
where 
\begin{align*}
    \hat{W}_{11}=\begin{bmatrix}
        W_{11} \\ 0
    \end{bmatrix}, \hat{W}_{12}=\begin{bmatrix}
        W_{12} \\ 0
    \end{bmatrix}, \hat{W}_{21}=\begin{bmatrix}
      0 \\   W_{21} 
    \end{bmatrix}, \hat{W}_{22}=\begin{bmatrix}
      0 \\   W_{22} 
    \end{bmatrix}, \\ 
    \hat{W}_{31}=\begin{bmatrix}
        W_{31} \\ 0
    \end{bmatrix}, \hat{W}_{32}=\begin{bmatrix}
        W_{32} \\ 0
    \end{bmatrix}, \hat{W}_{41}=\begin{bmatrix}
      0 \\   W_{41} 
    \end{bmatrix}, \hat{W}_{42}=\begin{bmatrix}
      0 \\   W_{42} 
    \end{bmatrix}. 
\end{align*}
Let the QR decompositions of $H_1^T$ and $H_2^T$ be $H_1^T=Q_{H_1}R_{H_1}$ and $H_2^T=Q_{H_2}R_{H_2}$ respectively.
Thus, under the following 
assumptions: 
	\begin{itemize}
	    \item [1.] 
     ${\rm range}(R_{H_1}^{-1}(R_{H_1}^{-1})^TX_1^T)\subset {\rm range}(X_1^T)$
     and ${\rm range}(R_{H_2}^{-1}(R_{H_2}^{-1})^TY_3)\subset {\rm range}(Y_3)$;
 \item [2.] $\Omega_2\hat{W}_{21}$ and $\Omega_3\hat{W}_{41}$ are full column rank;
\end{itemize}
combining the GSVD-based error analyses in Section \ref{GSVD based error analyses} with Theorem \ref{theorem10}, we can obtain the following theorem.

\begin{theorem}
	With the same setting as Theorem \ref{theorem10} and the above assumptions, let $k$ be the target rank and $p$ be the oversampling parameter,  
 and let 
 \begin{align*}
     \eta_1=\Vert H_1 \Vert \sqrt{1+(n-k-p)4^{k+p-1}}, \eta_2=\Vert H_2 \Vert \sqrt{1+(m-k-p)4^{k+p-1}}.
 \end{align*}
 Then
 {\small\begin{align*}
		\mathbb{E}\Vert A-C_AM_AR_A \Vert &\leq 
  \eta_1 \left[ \Vert \Sigma_{31}\Sigma_{41}^{-1} \Vert \left( \frac{{\rm e}\sqrt{k+p}}{p}\Vert \Sigma_{32} \Vert_F+\sqrt{\frac{k}{p-1}}\Vert \Sigma_{32} \Vert \right) +\Vert \Sigma_{32} \Vert \right] \\
		&  +\eta_2 \left[ \Vert \Sigma_{51}\Sigma_{61}^{-1} \Vert \left( \frac{{\rm e}\sqrt{k+p}}{p}\Vert \Sigma_{52} \Vert_F+\sqrt{\frac{k}{p-1}}\Vert \Sigma_{52} \Vert \right) +\Vert \Sigma_{52} \Vert \right] , \\
		\mathbb{E}\Vert B-C_BM_BR_B \Vert &\leq \eta_2 \left[ \left( \frac{{\rm e}\sqrt{k+p}}{p}\Vert \Sigma_{52} \Vert_F+\sqrt{\frac{k}{p-1}}\Vert \Sigma_{52} \Vert \right) + \Vert \Sigma_{62} \Vert \right], \\		
		\mathbb{E}\Vert G-C_GM_GR_G \Vert &\leq  \eta_1 \left[ \left( \frac{{\rm e}\sqrt{k+p}}{p}\Vert \Sigma_{32} \Vert_F+\sqrt{\frac{k}{p-1}}\Vert \Sigma_{32} \Vert \right)+ \Vert \Sigma_{42} \Vert \right].
	\end{align*}}
\end{theorem}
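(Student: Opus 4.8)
The plan is to run the argument of Theorem \ref{theorem-err3} twice, once for each of the two GSVDs introduced above, and to glue the pieces together using the deterministic splitting already provided by Theorem \ref{theorem10}. First I would start from the three bounds \eqref{Aerror}, \eqref{Berror} and \eqref{Gerror}. The column residuals $\Vert A(I_n-X_1^{\dagger}X_1)\Vert$ and $\Vert G(I_n-X_1^{\dagger}X_1)\Vert$ are controlled by the GSVD of $(A,G)$ through the sketch $X_1$, so they fall directly into the framework of Section \ref{GSVD based error analyses} under the substitutions $X\mapsto X_1$, $Y\mapsto H_1$, $(\Sigma_1,\Sigma_2)\mapsto(\Sigma_3,\Sigma_4)$ and $(\Gamma_1,\Gamma_2)\mapsto(\Omega_2\hat W_{21},\Omega_2\hat W_{12})$. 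Inserting $R_{H_1}^{-1}R_{H_1}=I_n$ as in \eqref{apen5}, using $\Vert H_1\Vert=\Vert R_{H_1}\Vert$ together with Lemma 3.2 of \cite{dong2021simpler} to produce the constant $\eta_1$, and invoking the analogues of \eqref{apen222} and \eqref{bpinfang}, bounds the deterministic residuals in terms of $\Sigma_{31},\Sigma_{41},\Sigma_{32},\Sigma_{42}$.

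The one genuinely new point is the row residual $\Vert (I_m-Y_3Y_3^{\dagger})A\Vert$ appearing in \eqref{Aerror}. I would handle it by transposition: since $Y_3Y_3^{\dagger}$ and $(Y_3^T)^{\dagger}Y_3^T$ are both the orthogonal projector onto $\mathrm{range}(Y_3)$, they coincide, whence $\Vert (I_m-Y_3Y_3^{\dagger})A\Vert=\Vert A^T\bigl(I_m-(Y_3^T)^{\dagger}Y_3^T\bigr)\Vert$. Because $Y_3^T=\Omega_3\begin{bmatrix}A^T\\ B^T\end{bmatrix}$ is a left (row) sketch of the transposed pair, this places the residual in exactly the Section \ref{GSVD based error analyses} setting applied to $(A^T,B^T)$ with substitutions $Y\mapsto H_2$, $(\Sigma_1,\Sigma_2)\mapsto(\Sigma_5,\Sigma_6)$ and $(\Gamma_1,\Gamma_2)\mapsto(\Omega_3\hat W_{41},\Omega_3\hat W_{32})$. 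The same $R_{H_2}^{-1}$ insertion and Lemma 3.2 of \cite{dong2021simpler} produce the constant $\eta_2$, and the residual $\Vert A^T(I_m-(Y_3^T)^{\dagger}Y_3^T)R_{H_2}^{-1}\Vert$ is bounded by $\Vert\Sigma_{51}\Sigma_{61}^{-1}\Vert\,\Vert\Gamma_3^{\dagger}\Gamma_4\Sigma_{52}\Vert+\Vert\Sigma_{52}\Vert$ just as in \eqref{apen222}, while the $B$ and $G$ residuals reduce to the simpler $\Sigma_{52}/\Sigma_{62}$ and $\Sigma_{32}/\Sigma_{42}$ forms via \eqref{bpinfang}.

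Next I would take expectations. Since $\hat W_{12},\hat W_{21}$ (respectively $\hat W_{32},\hat W_{41}$) have mutually orthogonal column ranges, the blocks $\Omega_2\hat W_{21},\Omega_2\hat W_{12}$ (respectively $\Omega_3\hat W_{41},\Omega_3\hat W_{32}$) are independent standard Gaussian matrices, so Lemmas \ref{Proposition 10.1} and \ref{Proposition 10.2} apply verbatim and reproduce the estimates \eqref{apen13} and \eqref{apen14} with $\Sigma_{12}$ replaced by $\Sigma_{32}$ or $\Sigma_{52}$. Assembling the column ($\eta_1$) and row ($\eta_2$) contributions yields the two-term bound for $A$; for $B$ only the $\eta_2$ term survives and for $G$ only the $\eta_1$ term survives, which is exactly the claimed set of inequalities. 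The step I expect to require the most care is the transposition identity $Y_3Y_3^{\dagger}=(Y_3^T)^{\dagger}Y_3^T$ together with checking that the two stated assumptions are precisely the hypotheses needed to invoke Lemma \ref{fanxu}, so that $(Y_3^TR_{H_2}^{-1})^{\dagger}=R_{H_2}(Y_3^T)^{\dagger}$ holds in the transposed setting and the full-column-rank condition on $\Omega_3\hat W_{41}$ is available; once these are secured, the remainder is bookkeeping parallel to the proof of Theorem \ref{theorem-err3}.
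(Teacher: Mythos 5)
Your proposal is correct and follows essentially the same route as the paper: the paper's own (one-line) proof is precisely "combine the GSVD-based error analyses of Section \ref{GSVD based error analyses}, applied to the pairs $(A,G)$ via $X_1$ and $(A^T,B^T)$ via $Y_3^T$, with Theorem \ref{theorem10}," and your plan fills in exactly those details — the substitutions into \eqref{apen222}/\eqref{bpinfang}, the transposition identity $Y_3Y_3^{\dagger}=(Y_3^T)^{\dagger}Y_3^T$ for the row residual, the independence of the Gaussian blocks, and Lemmas \ref{Proposition 10.1}--\ref{Proposition 10.2} together with Lemma 3.2 of \cite{dong2021simpler} to produce $\eta_1,\eta_2$. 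Your identification of the stated assumptions as precisely the hypotheses needed for Lemma \ref{fanxu} in both the $X_1$ and $Y_3^T$ settings is also consistent with the paper.
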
 
\begin{remark}
    Similar to Theorem \ref{theorem2}, Theorem \ref{theorem-err2} and Corollary \ref{COR1}, we can also obtain two alternative expectation error bounds of Algorithm \ref{alg5}.  For brevity, we omit them here.
\end{remark}

\section{Numerical experiments} \label{sec-num}
In this section, we 
compare numerically the CUR algorithms of matrix pair and matrix triplet whose abbreviations are summarized in Table \ref{table}, among which 
LDEIM-PCUR is obtained by combining the L-DEIM index selection (i.e., \cite[Algorithm 2]{gidisu2022rsvd}) with DEIM-PCUR (i.e., \cite[Algorithm 4.1]{gidisu2022generalized}).
 These algorithms 
 are performed five times, and the numerical results on relative errors and runtime are presented as their average. For a matrix $A$, the relative error of its low rank approximation is defined by $\Vert A-C_AM_AR_A \Vert/ \Vert A \Vert$. 

\begin{table}
\caption{Abbreviations for algorithms}
\label{table}       
\begin{tabular}{lll}
\hline\noalign{\smallskip}
 & matrix pair   & matrix triplet  \\
\noalign{\smallskip}\hline\noalign{\smallskip}
CPQR+Randomized & RPCUR(Algorithm \ref{alg2}) & RTCUR(Algorithm \ref{alg4}) \\
Pass-efficient & PE-PCUR(Algorithm \ref{alg3}) & PE-TCUR(Algorithm \ref{alg5}) \\
DEIM & DEIM-PCUR(\cite[Algorithm 4.1]{gidisu2022generalized}) & DEIM-TCUR(\cite[Algorithm 3]{gidisu2022rsvd}) \\
L-DEIM & LDEIM-PCUR & LDEIM-TCUR(\cite[Algorithm 4]{gidisu2022rsvd}) \\
L-DEIM+Randomized & RLDEIM-PCUR(\cite[Algorithm 5]{cao2023randomized}) & RLDEIM-TCUR(\cite[Algorithm 7]{cao2023randomized})\\
\noalign{\smallskip}\hline
\end{tabular}
\end{table}

\subsection{On the matrix pair}

{\bf Experiment 1} 
As done in \cite{wei2021randomized}, we generate the matrix pair $(A,B)$ as follows, 
\begin{align*}
    A=A_1A_2, \  B=B_1B_2,
\end{align*}
where $A_1\in \mathbb{R}^{10000\times 100}, A_2\in \mathbb{R}^{100\times 5000}, B_1\in \mathbb{R}^{8000\times 100}$ and $ B_2\in \mathbb{R}^{100\times 5000}$ are randomly generated by using the MATLAB build-in function randn. Note that ${\rm rank}(A)={\rm rank}(B)=100$ with probability one. 
We 
implement the algorithms in Table \ref{table} on the above matrix pair $(A,B)$, and present the relative errors and runtime in Fig. \ref{gcur_errors_times2}. 

\begin{figure}[h] 
	\centering
	\includegraphics[scale=0.31]{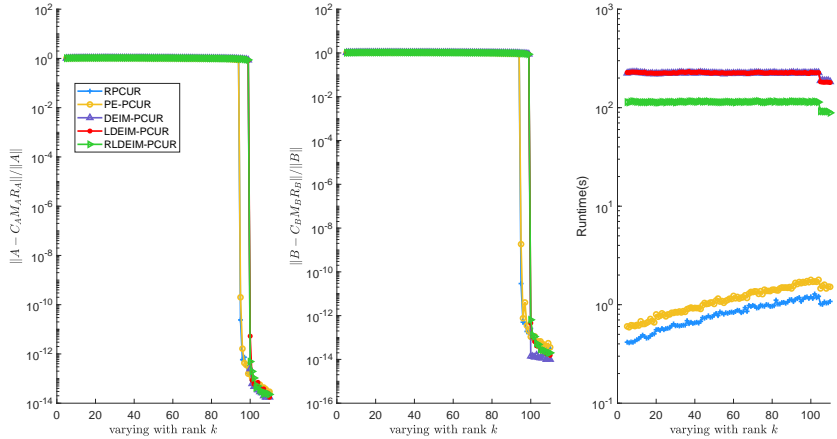} 
	\caption{Errors and runtime for different CUR algorithms of  $(A,B)$}   
	\label{gcur_errors_times2}	    	  	   
\end{figure} 

From the first two figures in Fig. \ref{gcur_errors_times2}, it is seen that if the target rank is less than 100,  all the algorithms have almost the same performance in relative errors. When the target rank approaches 100, the relative errors of RPCUR and PE-PCUR are smaller than those of DEIM-PCUR,  LDEIM-PCUR and RLDEIM-PCUR. This mainly because the oversampling parameter $p=5$ is used in RPCUR and PE-PCUR. Therefore, all the algorithms actually perform almost the same in relative errors.

From the last figure in Fig. \ref{gcur_errors_times2},
we can find that RPCUR needs the least  runtime, followed by PE-PCUR. They are much cheaper than RLDEIM-PCUR, which in turn is cheaper than DEIM-PCUR and LDEIM-PCUR.

\subsection{On the matrix triplet}

{\bf Experiment 2} As done in Experiment 1, we construct the matrix triplet $(A,B,G)$ as follows, 
\begin{align*}
    A=A_1A_2,\  B=B_1B_2,\  G=G_1G_2,
\end{align*}
where $A_1\in \mathbb{R}^{5000\times 100}, A_2\in \mathbb{R}^{100\times 5000}, B_1\in \mathbb{R}^{5000\times 100}, B_2\in \mathbb{R}^{100\times 10000}, G_1\in \mathbb{R}^{10000\times 100}$ and $G_2\in \mathbb{R}^{100\times 5000}$ are generated by the MATLAB function randn. It is clear that ${\rm rank}(A)={\rm rank}(B)={\rm rank}(G)=100$ with probability one. 
Then, the algorithms in Table \ref{table} on the above matrix triplet $(A,B,G)$ are  performed, and the relative errors and  runtime are reported in Fig. \ref{tcur_errors_times2}. 

\begin{figure}[h]
	\centering
	\includegraphics[scale=0.31]{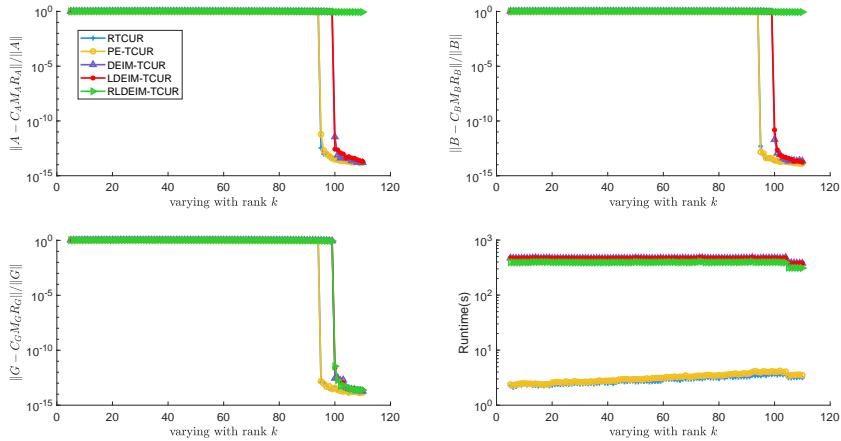} 
	\caption{Errors and runtime for different CUR algorithms of  $(A,B,G)$}    
	\label{tcur_errors_times2}	    	  	   
\end{figure}

From Fig. \ref{tcur_errors_times2}, it follows that the conclusions are similar to those obtained in Experiment 1 except that, in approximating $A$ and $ B$, 
the relative errors of RLDEIM-TCUR are barely falling when $k> 100$. 


\section{Conclusions}
    This paper presents the CPQR-based randomized algorithms for the generalized CUR decompositions of matrix pair and matrix triplet. Their pass-efficient versions are also obtained. 
    For these algorithms, we present two alternative expectation error analyses, among which 
    the second one is mainly inspired by 
    the method in \cite{halko2011finding}. 
    Numerical experiments demonstrate that our algorithms can calculate the generalized CUR decompositions as accurate as the existing methods, but need much less  runtime. 

%
%




\end{document}